\newtheorem{thm}{Theorem}[section]
\newtheorem{Lemma}[thm]{Lemma}
\newtheorem{cor}[thm]{Corollary}
\newtheorem{observ}[thm]{Observation}
\newtheorem{Alg}[thm]{Algorithm}
\title{On the anti-forcing number of  fullerene graphs\footnote{This work was supported by NSFC (Grant No. 11371180).
       }}
\author{Qin Yang, Heping Zhang\footnote{The corresponding author.}
\\\small{School of Mathematics and Statistics, Lanzhou
University, Lanzhou, Gansu 730000, P. R. China.}
\\\small{E-mail addresses: yangqin4587@126.com, zhanghp@lzu.edu.cn
}
\\Yuqing Lin
\\\small{School of Electrical Engineering and Computer Science, The University of Newcastle, Australia}
\\\small{E-mail addresses: yuqing.lin@newcastle.edu.au}
}
\date{}
\begin{document}

\maketitle \noindent \textbf{Abstract:} The anti-forcing number of a
connected graph $G$ is the smallest number of edges such that the
remaining graph obtained by deleting these edges has a unique
perfect matching. In this paper, we show that the
anti-forcing number of every fullerene has at least four. We give a procedure to construct all fullerenes whose  anti-forcing numbers achieve the lower bound four. Furthermore, we show that, for every
even $n\geq20$ ($n\neq22,26$), there exists a
fullerene with  $n$ vertices that has the anti-forcing number four, and the fullerene with 26 vertices has the anti-forcing number five.

\noindent \textbf{Keywords:} Fullerene graph; Perfect matching; Anti-forcing number; Forcing number

\section{Introduction}

\setlength{\unitlength}{1cm}

A {\em fullerene  graph } (simply fullerene) is a cubic 3-connected plane graph
with only pentagonal faces (exact 12 of them by Euler's polyhedron formula) and hexagonal faces. It is a molecular graph of novel spherical carbon clusters called fullerenes \cite{fowler}.    The first fullerene molecule  C$_{60}$ was  discovered in 1985 by Kroto et al. \cite{c60}. It is well known that a fullerene graph on $n$
vertices exists for every even $n\geq20$ except $n=22$ \cite{GM}.

A set of independent edges of a  graph $G$ is called a
\textit{matching} of $G$. A matching $M$ of $G$ is called \textit{perfect matching} (or Kekul\'{e}
structure in chemical literature) if every vertex of $G$ is incident with exactly one edge in $M$.
 Kekul\'{e} structure plays a very important role in analysis of the property of benzenoid hydrocarbons, fullerenes
and other carbon cages. 

Let $G$ be a graph with a perfect matching  $M$.  A set $S\subseteq M$ is called a {\em forcing set} of
$M$ if $S$ cannot be contained in another perfect matching of $G$ other than $M$. The {\em forcing number} (or
innate degree of freedom) of $M$ is defined as the minimum size of
all forcing sets of $M$, denoted by $f(G,M)$ \cite{F. Harary,D.J. Klein}.
The minimum forcing number of $G$ is the minimal value of the forcing numbers
of all perfect matchings of $G$, denoted by $f(G)$.  Zhang,  Ye
and Shiu \cite{fcf} proved that the minimum forcing number
of fullerenes has a lower bound 3 and there are infinitely many fullerenes achieving this bound.

Let $G$  be a  graph with vertex set $V(G)$ and edge set $E(G)$.  For $S \subseteq E(G)$, let $G-S$ denote
the graph obtained by removing $S$ from $G$. Then $S$ is called an
{\em anti-forcing set} if $G-S$ has a unique perfect matching. The cardinality of a smallest
anti-forcing set is called the {\em anti-forcing number} of $G$, denoted by $af(G)$.

D. Vuki\v{c}evi\'{c} and N.
Trinajsti\'{c} \cite{afcben,afcata} recently introduced  the anti-forcing number of graphs and determined the anti-forcing numbers of parallelogram benzenoid
and cata-condensed benzenoids. In fact,    X. Li \cite{xli} had showed that a benzenoid with a forcing single edge (equivalently, it has the anti-forcing number one) if and only if it is a truncated parallelogram before it. In this paper, we prove that the  anti-forcing
number of fullerenes has a lower bound  4. Then we present an approach to generate  all fullerenes which
achieve the lower bound 4 for the anti-forcing number. Furthermore, we demonstrate how to construct at least one fullerene $F_{n}$ with $n$ vertices such that
$af(F_{n})=4$ for every even $n\geq20$ except $n=22$ and $26$.

\section{Basic definitions and preliminaries}

\setlength{\unitlength}{1cm}

Firstly, we summarize some known results on the extendability and connectivity of fullerene graphs, which will be used in proving our main results.

A connected graph $G$ with at least $2(k+1)$ vertices is said to be
\textbf{$k$-$extendable$} if $G$ has a perfect matching and any $k$ disjoint edges of $G$ belong to
a perfect matching of $G$.

\begin{thm}
(\cite{2extendable}) \label{2-entend} Every fullerene graph is 2-extendable.
\end{thm}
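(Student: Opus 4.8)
The plan is to reduce $2$-extendability to a perfect-matching existence statement and then attack it with Tutte's theorem and edge counting. Recall the elementary reformulation: a graph $G$ possessing a perfect matching with $|V(G)|\ge 2k+2$ is $k$-extendable if and only if, for every set of $k$ independent edges $e_1,\dots,e_k$, deleting their $2k$ endpoints leaves a graph with a perfect matching (such a matching, together with $e_1,\dots,e_k$, is exactly a perfect matching of $G$ containing the $e_i$). A fullerene $G$ is cubic and $3$-connected, hence bridgeless, so it has a perfect matching by Petersen's theorem, and $|V(G)|\ge 20>6$. Thus it suffices to prove: for any two independent edges $e_1=u_1v_1$ and $e_2=u_2v_2$, the graph $H=G-T$ with $T=\{u_1,v_1,u_2,v_2\}$ has a perfect matching.

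Suppose not. Then by Tutte's theorem there is a set $S\subseteq V(H)$ with $o(H-S)>|S|$, where $o(\cdot)$ denotes the number of odd components; since $|V(H)|=|V(G)|-4$ is even, parity forces $o(H-S)\ge |S|+2$. Write $q=|S|$ and let $C_1,\dots,C_t$, $t\ge q+2$, be the odd components of $G-S-T$. Every edge leaving some $C_i$ must land in $S\cup T$, so I would estimate the number of such edges two ways. Since $G$ is cubic, the number of edges leaving an odd component is odd, hence at least $3$; moreover, by parity together with the cyclic edge-connectivity of fullerenes (which is $5$, by Do\v{s}li\'{c}), every non-singleton odd component emits at least $5$ edges: if it is acyclic with $2\ell+1\ge 3$ vertices a direct degree count gives $(2\ell+1)+2\ge 5$ leaving edges, and if it contains a cycle the separating cut is a cyclic edge cut. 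On the other hand the vertices of $S$ receive at most $3q$ of these edges, and the four vertices of $T$, each carrying one edge inside $T$, send out at most $8$ edges in total.

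Letting $a,b$ count the singleton and non-singleton odd components, the two bounds give $3a+5b\le 3q+8$ while $a+b=t\ge q+2$; writing $3a+5b=3(a+b)+2b$ yields $2b\le 2$, so $b\le 1$, and both inequalities are then nearly tight. This forces a rigid extremal configuration: either $b=0$ and $a=q+2$, or $b=1$, $a=q+1$ with the lone non-singleton component emitting exactly $5$ edges, and in both cases essentially all edges incident with $S\cup T$ are consumed by the odd components. The remaining and hardest step is to eliminate these extremal configurations. Here I would use planarity and the girth-$5$ property: the $a$ singleton components form an independent set whose $3a$ incident edges all fall into the $(q+4)$-element set $S\cup T$, an overloaded bipartite attachment that, by pigeonhole, forces repeated neighbours and hence either a separating edge cut of size at most $3$ (contradicting $3$-connectivity) or a face boundary of length at most $4$ (contradicting girth $5$). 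Turning this heuristic into a rigorous analysis of the planar embedding is the main obstacle; the counting is arranged precisely so that only these few bounded configurations survive for the planar/girth argument to dispatch.
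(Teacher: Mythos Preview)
The paper does not prove this theorem at all: it is quoted as a result of H.~Zhang and F.~Zhang \cite{2extendable} and used as a black box, so there is no in-paper argument to compare yours against.

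That said, your approach via Tutte's theorem and edge-counting is the standard one and your inequalities are set up correctly; the reduction to $b\le 1$ is clean. The gap is exactly where you say it is: you have not eliminated the extremal configurations, and the ``overloaded bipartite attachment / pigeonhole'' heuristic you sketch is too vague to be a proof. The concrete tool that finishes the job is the independence bound for fullerenes obtained by face-counting: if $A$ is independent in a fullerene on $n$ vertices, then each pentagon meets $A$ in at most two vertices and each hexagon in at most three, so
\[
3|A|=\sum_{f}|f\cap A|\le 2\cdot 12+3\Big(\tfrac{n}{2}-10\Big)=\tfrac{3n}{2}-6,
\]
i.e.\ $|A|\le n/2-2$. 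In your case $b=0$ one checks there are no even components, whence $n=2q+6$ while the $a=q+2$ singletons form an independent set of size $n/2-1$, a contradiction. In the case $b=1$ with $|C|=3$ the two ends of the path $C$ can be adjoined to $A$ to again exceed $n/2-2$. For $|C|=5$ (then $C$ is a $5$-cycle) adjoining two nonadjacent vertices of $C$ gives $|A'|=n/2-2$, forcing \emph{equality} in the face count; tracing the face across the edge $c_4c_5$ of $C$ then yields four consecutive boundary vertices not in $A'$, which is impossible for a pentagon or hexagon meeting $A'$ in exactly $2$ or $3$ vertices. Only the subcase $|C|\ge 7$ (a nontrivial cyclic $5$-edge-cut) remains, and there one must invoke the structural description of such fullerenes (two antipodal pentacaps joined by a tube) to finish. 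So your outline is salvageable, but as written it stops short of a proof.
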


Let $G$ be a connected graph.  For nonempty subsets $X, Y$ of $V(G)$, let $[X,Y]$ denote the
set of edges of $G$ that each has one end-vertex  in $X$ and the other in $Y$.
If $\overline{X}=V(G)\setminus X\not=\emptyset$, then  $[X, \overline{X}]$ is called an {\em edge-cut} of $G$, and {\em $k$-edge-cut} whenever $|[X, \overline{X}]|=k$.  An edge-cut $S$ of $G$ is {\em cyclic} if at least two components of $G- S$ contain a cycle.
%

The cardinality of the smallest  cyclic edge-cut of $G$ is called the {\em cyclic edge connectivity} of $G$, denoted by $c\lambda(G)$.
We call an edge-cut {\em trivial} if its edges are incident with the
same vertex. We call a  cyclic $k$-edge-cut {\em trivial} if one of the resulting components
induces a single $k$-cycle.

\begin{thm}\label{clic is 5}
\cite{kcyclic conn1,kcyclic conn2} Every fullerene graph has the
cyclic edge connectivity 5.
\end{thm}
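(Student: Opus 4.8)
The plan is to establish the two inequalities $c\lambda(G)\le 5$ and $c\lambda(G)\ge 5$ separately. For the upper bound I would fix any pentagonal face $P$ of the fullerene $G$. Since $G$ is cubic, each of the five vertices of $P$ is incident with exactly one edge off $P$, so the set $S$ of these five edges is an edge-cut separating $V(P)$ from the rest. The component induced by $V(P)$ is the $5$-cycle bounding $P$, while the other component has at least $n-5\ge 15$ vertices and hence contains a cycle; thus $S$ is a cyclic $5$-edge-cut and $c\lambda(G)\le 5$.

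For the lower bound I would argue by contradiction, assuming $G$ has a cyclic edge-cut $S=[X,\overline{X}]$ with $|S|=k\le 4$, chosen with $k$ minimum. By minimality I may take $G[X]$ and $G[\overline{X}]$ connected, each containing a cycle. Since $G$ is cubic and $3$-connected it is $3$-edge-connected, so $k\ge 3$ and hence $k\in\{3,4\}$. I would first record that $G$ has girth $5$: a shortest cycle bounds a single face, and every face is a pentagon or hexagon. Then I would pass to the planar ``ring of faces'' description of $S$: the $k$ cut-edges are precisely the edges met by a Jordan curve separating $X$ from $\overline{X}$, and this curve crosses a cyclic sequence of $k$ faces whose union is an annulus bounded by a cycle $B_1\subseteq G[X]$ and a cycle $B_2\subseteq G[\overline{X}]$, cutting the sphere into two closed disks $D_1,D_2$.

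The analytic core is an Euler-formula computation on each disk $D_i$, viewed as a patch whose interior vertices are cubic, whose internal faces are pentagons and hexagons, and exactly $k$ of whose boundary vertices carry a cut-edge (the rest having their third edge inside $D_i$). Writing $p_i$ for the number of pentagons inside $D_i$, a direct count with Euler's formula yields
\[
|B_i| \;=\; p_i + 2k - 6 .
\]
Since $B_i$ is a cycle of the girth-$5$ graph $G$, we have $|B_i|\ge 5$, giving $p_i\ge 11-2k$; together with $p_1+p_2\le 12$ this forces both boundary cycles to be short and to enclose almost all twelve pentagons when $k\in\{3,4\}$. I would eliminate the few surviving configurations using $3$-connectivity: the $\le k$ endpoints of the cut-edges on $B_i$ form a vertex-cut of $G$ of size $\le k$, and a short analysis of how $B_i$ can meet them, using girth $5$ to forbid triangular and quadrilateral boundaries and $3$-connectivity to forbid $2$-cuts, shows no such cut admits cycles on both sides.

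The main obstacle is exactly this last step. The Euler count by itself is not conclusive for $k=4$, where the identity still permits, say, a ring of four faces bounded on each side by a hexagon enclosing four pentagons; the genuine work is the structural case analysis ruling out such near-misses by combining planarity, girth $5$, and $3$-connectivity. Once these are excluded we obtain $c\lambda(G)\ge 5$, and with the upper bound this gives $c\lambda(G)=5$.
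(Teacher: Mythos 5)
First, a point of order: the paper does not prove this statement at all --- Theorem~\ref{clic is 5} is quoted from the literature (Do\v{s}li\'{c}; Qi and Zhang), so your proposal cannot be compared with a proof in the paper, only judged on its own merits.

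On those merits there is a genuine gap, and you have in fact located it yourself. The upper bound is fine: the five edges leaving a pentagonal face form a cyclic $5$-edge-cut, since the large side has $(3(n-5)-5)/2 \ge n-5$ edges and hence a cycle. The lower-bound framework is also sound: for a minimum cyclic edge-cut $S=[X,\overline{X}]$ of size $k\le 4$, minimality plus cubicness do give distinct cut-edge endpoints and genuine boundary cycles $B_1,B_2$, and your patch identity $|B_i|=p_i+2k-6$ is correct. But the argument stops exactly where the theorem begins. For $k=3$ the counting is entirely self-consistent rather than contradictory: the identity gives $|B_i|=p_i$, and a configuration with two patches, each bounded by a $5$-cycle and containing five pentagons, joined through a ring of two pentagons and one hexagon, satisfies every numerical constraint you derive (the pentagon count $p_1+p_2+r=12$ and the boundary-length count $|B_1|+|B_2|=12-r$ hold automatically); the situation for $k=4$ is similar, as you admit. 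So no contradiction follows from Euler counting plus girth, and something must eliminate these configurations combinatorially. The one concrete tool you offer for this step --- that the at most $k$ cut-edge endpoints on $B_i$ form a vertex-cut of size at most $k$, to be contradicted by $3$-connectivity --- cannot work: a $3$-connected graph is perfectly entitled to have vertex cuts of size $3$ and $4$, so for $k\in\{3,4\}$ this yields nothing. What is missing is precisely the structural analysis of the small patches (for instance, showing that a pentagon/hexagon patch with cubic interior, boundary a $5$-cycle, and only two boundary vertices of degree $3$ cannot exist), and that analysis is the actual substance of the cited proofs of Do\v{s}li\'{c}, Qi--Zhang, Kardo\v{s}--\v{S}krekovski and Kutnar--Maru\v{s}i\v{c}; it is not a routine finishing step, as the early incomplete published attempt at this very theorem attests.
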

The theorem together with 3-connectivity imply that every fullerene graph has the girth 5 ( the length of  a shortest cycle), and each of all the 5-cycles and 6-cycles of a fullerene graph bound a face. So pentagonal face and hexagonal face in a fullerene coincide with pentagon and hexagon respectively. A pentacap is a graph consisting of 6 pentagons, as shown in
Fig. 1 (left). F. Kardo\v{s} and R. \v{S}krekovski \cite{kardos}, and K. Kutnar and D. Maru\v{s}i\v{c} \cite{cbgkehua} independently proved that there is
only one class of fullerenes which admit nontrivial cyclic 5-edge-cut,
as shown in Fig. 1 (right).

\begin{figure}[htbp]
\begin{center}
\includegraphics[totalheight=4.0 cm]{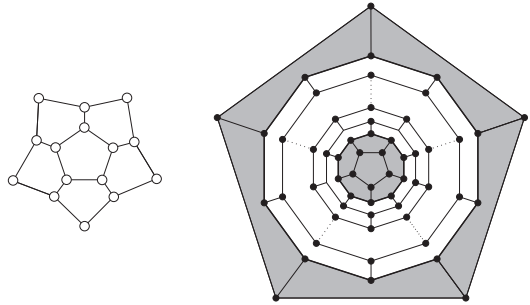}
\caption{\label{1}\small{A pentacap (left) and a class of fullerenes
which admit nontrivial cyclic 5-edge-cut (right).}}
\end{center}
\end{figure}

\begin{thm}
\cite{kardos, cbgkehua} Let F be a fullerene admitting a nontrivial cyclic
5-edge-cut. Then F contains a pentacap, and more precisely, F
contains two disjoint antipodal pentacaps.
\end{thm}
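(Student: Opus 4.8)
The plan is to analyze the two plane pieces produced by the cut together with the ``ring'' of faces it crosses, and then to force each piece to cap off as a pentacap by a combinatorial Gauss--Bonnet count reinforced by the girth and connectivity constraints. First I would normalize the cut. Let $S=[X,\overline X]$ be a nontrivial cyclic $5$-edge-cut. Since $F$ is cubic, a vertex could in principle meet two edges of $S$; but moving such a vertex to the other side decreases $|S|$ to $4$ while keeping both sides cyclic (or else one side becomes acyclic, i.e. the cut was trivial), contradicting $c\lambda(F)=5$ from Theorem~\ref{clic is 5}. Hence I may assume the five edges of $S$ form a matching with ten distinct ends, so that $F[X]$ and $F[\overline X]$ are two disks (each $2$-connected by $3$-connectivity), each bounded by a cycle through its five attachment vertices, and $S$ is crossed by a simple closed curve meeting exactly five faces $f_1,\dots,f_5$ cyclically.

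Next I would run the counting engine on each disk. For a disk patch whose inner faces are only pentagons and hexagons, writing $b_2,b_3$ for the numbers of boundary vertices of degree $2$ and $3$ in the patch and $f_5$ for its number of pentagons, Euler's formula, the handshake identity and the face--edge incidence count combine to give
\begin{equation*}
f_5 = 6 + b_3 - b_2 .
\end{equation*}
On each side the degree-$2$ boundary vertices are exactly the ends of the cut edges, so $b_2=5$ and $f_5=1+b_3$; in particular $b_3=0$ would make that side a single pentagon, the excluded trivial case, whence $b_3,b_3'\ge 1$. Since the five crossed faces $f_i$ (say $p$ of them pentagons) carry the only pentagons interior to neither patch, the global count $f_5+f_5'+p=12$ yields $b_3+b_3' = 10-p \le 10$.

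The heart of the matter---and the main obstacle---is to upgrade these inequalities to the rigid pentacap structure, that is, to show $p=0$ and $b_3=b_3'=5$. Here pure counting is not enough (it admits unbalanced solutions such as $b_3=2,\ b_3'=8$), so I would bring in the geometry: each crossed face contributes arcs of lengths $a_i,b_i\ge1$ to the two boundaries with $a_i+b_i=|f_i|-2\in\{3,4\}$, and I would peel each disk outward ring by ring. Using girth $5$ (no short face can pinch the boundary) together with cyclic $5$-edge-connectivity (each concentric ring is again crossed by a $5$-cut, so it must be a clean ring of five hexagons), every peel preserves a $10$-cycle boundary with $b_2=5$ and strictly shrinks the patch, the only terminal configuration being the unique minimal cap carrying six pentagons, namely the pentacap. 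Carrying this out rigorously---in particular excluding every mixed pentagon/hexagon ring and every asymmetric arc distribution---is precisely the delicate case analysis that constitutes the real work.

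Finally I would assemble the two ends. The same argument applies verbatim to $\overline X$, so $F[\overline X]$ also caps off as a pentacap; since the two caps sit at opposite ends of the intervening hexagonal tube and lie on opposite sides of $S$, they are vertex-disjoint and antipodal. Thus $F$ contains a pentacap, and more precisely two disjoint antipodal pentacaps, as claimed.
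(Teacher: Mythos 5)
The first thing to note is that the paper itself does not prove this statement: Theorem 2.3 is quoted from Kardo\v{s}--\v{S}krekovski and Kutnar--Maru\v{s}i\v{c}, so there is no in-paper proof to compare against. Judged on its own terms, your proposal has a genuine gap. Your setup is correct as far as it goes: the normalization making the five cut edges independent is sound (indeed it is cleaner than you state --- a vertex meeting two cut edges has degree 1 on its own side, so it lies on no cycle there, and shifting it across yields a cyclic 4-edge-cut, contradicting $c\lambda(F)=5$), and the Euler count $f_5=6+b_3-b_2$, giving $f_5=1+b_3$ on each side and $b_3+b_3'=10-p$ globally, is verified easily. But, as you yourself observe, these counts admit unbalanced solutions and do not force $p=0$ or $b_3=b_3'=5$.

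The gap is the entire ring-peeling step, which is not an auxiliary detail but the whole theorem. You assert that ``each concentric ring is again crossed by a $5$-cut, so it must be a clean ring of five hexagons,'' yet neither half of this is established: (i) nothing shown so far implies that the layer of faces incident with the boundary of a patch is separated from the rest of that patch by an edge-cut of size exactly $5$, nor that this cut is cyclic (the patch interior could be too small to contain a cycle) or nontrivial; and (ii) even granting a $5$-cut, you have not excluded mixed pentagon/hexagon layers or asymmetric arc distributions --- ruling these out is precisely the case analysis that Kardo\v{s} and \v{S}krekovski's paper is devoted to, since it is equivalent to showing that such a fullerene is a $(5,0)$-nanotube: two pentacaps joined by concentric rings of five hexagons. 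Deferring this as ``the real work'' means the proposal is a plan for a proof, not a proof. The same gap infects the final step: the claim that the two caps are \emph{antipodal} and disjoint presupposes the tube structure you have not derived, rather than following from the two patches merely lying on opposite sides of $S$.
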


From the structure of fullerenes admitting a nontrivial cyclic
5-edge-cut, we have the following observations.

\begin{observ}\label{nontrivial cyclic 5-edge-cut cor}
Let F be a fullerene with a nontrivial cyclic 5-edge-cut S. Then each
component of  $F-S$ is 2-connected. Furthermore, each component
deleting one 2-degree vertex is still 2-connected.
\end{observ}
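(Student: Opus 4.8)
The plan is to combine the classification of fullerenes with a nontrivial cyclic $5$-edge-cut (the structure theorem above, giving two disjoint antipodal pentacaps) with the facts that $c\lambda(F)=5$ and that $F$ is cubic, $3$-connected and of girth $5$. Write $S=[X,\overline{X}]$, so that $F-S$ has exactly the two components $C_1=F[X]$ and $C_2=F[\overline{X}]$, each containing a cycle by the definition of a cyclic edge-cut. First I would record a preliminary fact: no vertex of $F$ is incident with two edges of $S$. Indeed, a vertex $x\in X$ meeting two edges of $S$ has a unique neighbour in $X$ (it cannot meet three, else $\{x\}$ would be a component of $C_1$ containing no cycle); moving $x$ to the other side then replaces those two cut-edges by the single edge joining $x$ to that neighbour, yielding an edge-cut of size $4$. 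Since $x$ is a pendant vertex of $C_1$, removing it destroys no cycle, so both sides still contain a cycle and we get a cyclic $4$-edge-cut, contradicting $c\lambda(F)=5$ (Theorem~\ref{clic is 5}). Consequently each $C_i$ has exactly five degree-$2$ vertices, namely the endpoints of the edges of $S$, all other vertices having degree $3$.

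For the $2$-connectedness of $C_1$ (the case of $C_2$ is identical), suppose $C_1$ has a cut vertex $v$, and let $D_1,\dots,D_k$ ($k\ge2$) be the components of $C_1-v$. As $F$ is $3$-connected, $F-v$ is $2$-edge-connected, and since each $D_i$ is joined to the rest of $F-v$ only through edges of $S$, each $D_i$ must carry at least two such edges (a single one would be a bridge of $F-v$); because $|S|=5$ this forces $k=2$. If both $D_1$ and $D_2$ contained a cycle, then $[D_1,\overline{D_1}]$ and $[D_2,\overline{D_2}]$ would both be cyclic edge-cuts, hence each of size $\ge5$, whereas their sizes sum to at most $\deg(v)+|S|=3+5=8<10$; so one of them, say $D_1$, is a tree. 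A leaf count gives that exactly $|D_1|+2$ edges leave $D_1$, and combining $\deg(v)=3$, $|S|=5$, the preliminary fact and girth $5$ eliminates each of $|D_1|\in\{1,2,3\}$: the value $1$ forces a vertex on two edges of $S$, while $2$ and $3$ force a triangle or a quadrilateral through $v$. This contradiction shows $C_1$ is $2$-connected.

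Finally, for the deletion of a degree-$2$ vertex $w$, the key point — and the step I expect to be the main obstacle — is that the five degree-$2$ vertices of $C_i$ are pairwise non-adjacent, each having two degree-$3$ neighbours on the boundary cycle of the (now known to be $2$-connected) plane graph $C_i$. This cannot be obtained from connectivity alone, and is exactly where the explicit pentacap structure from the classification theorem enters: each $C_i$ is a pentacap extended by finitely many rings of five hexagons, whose outer boundary is a $10$-cycle on which the five degree-$2$ vertices alternate with five degree-$3$ vertices. Granting this, deleting a degree-$2$ vertex $w$ merely short-cuts the outer boundary cycle across the two (now degree-$2$) neighbours of $w$, creating neither a pendant vertex nor a cut vertex, so $C_i-w$ remains $2$-connected. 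I would present this last verification by induction on the number of hexagonal rings, with the pentacap itself as the base case, checking that adding one ring of five hexagons preserves both the alternating boundary pattern and $2$-connectedness.
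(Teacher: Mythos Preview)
Your argument is correct. The paper itself gives no proof at all: it states the observation immediately after the classification theorem (two antipodal pentacaps joined by rings of five hexagons) with the remark ``from the structure of fullerenes admitting a nontrivial cyclic $5$-edge-cut, we have the following observations''. In other words, the paper simply reads both assertions off the explicit picture in Fig.~1.

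Your route is therefore genuinely different in the first half. You establish $2$-connectedness of each component using only $c\lambda(F)=5$, $3$-connectivity, and girth~$5$, via the leaf-count/tree-side argument; the classification theorem is not invoked there. This is more work than the paper does, but it has the virtue of isolating exactly which structural hypotheses are needed: your proof shows that the first conclusion holds for \emph{any} nontrivial cyclic $5$-edge-cut in a cubic $3$-connected graph of girth~$\ge5$, not just in a fullerene. Your observation that the second conclusion (about deleting a degree-$2$ vertex) genuinely requires the pentacap structure --- because one needs the five degree-$2$ boundary vertices to be pairwise non-adjacent on a $10$-cycle --- is exactly right, and at that point your approach and the paper's coincide.

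One small streamlining: once you decide to invoke the classification theorem for the second half anyway, the first half also follows instantly from the same picture, so in a write-up aimed at matching the paper you could drop the cut-vertex/tree analysis entirely. But as an independent argument it is sound.
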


\begin{observ}\label{the conment edges in cyclic 5-edge-cuts 1}
Let F be a fullerene with a nontrivial cyclic 5-edge-cut S. Then there is no common edge
for any two nontrivial cyclic 5-edge-cuts. And there exists at most one common edge for any
two cyclic 5-edge-cuts, one of them is nontrivial cyclic 5-edge-cut.
\end{observ}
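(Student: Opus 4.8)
The plan is to exploit the explicit description of $F$ furnished by the preceding theorem: $F$ is obtained from two disjoint antipodal pentacaps by joining them through a cyclic band of hexagons (a tube of circumference five). In this picture the natural $5$-edge-cuts are the \emph{transversal} cuts, each consisting of the five mutually non-adjacent edges crossing one cross-section of the tube (the topmost one being the five external edges emanating from the degree-two boundary vertices of a pentacap). Two distinct transversal cuts lie in disjoint cylindrical strips, so they are edge-disjoint. First I would record that a cyclic $5$-edge-cut is \emph{trivial} exactly when one side is a single pentagon: by the remark that every $5$-cycle of $F$ bounds a face, the $5$-cycle side of a trivial cut is a pentagonal face, and conversely the five spokes of any pentagon form a trivial cyclic $5$-edge-cut.

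The heart of the argument is the classification lemma: \emph{every nontrivial cyclic $5$-edge-cut of $F$ is a transversal cut}. Granting this, Part 1 is immediate, since two transversal cuts are edge-disjoint and hence share no edge. To prove the lemma I would use an uncrossing argument. Let $S=[X,\overline X]$ be a nontrivial cyclic $5$-edge-cut and let $T=[Z,\overline Z]$ be a transversal cut; writing $A=X\cap Z$, $B=X\cap\overline Z$, $C=\overline X\cap Z$, $D=\overline X\cap\overline Z$, the common edges are $[A,D]\cup[B,C]$ and the symmetric difference $S\triangle T=[A\cup D,B\cup C]$ is again an edge-cut, of size $10-2|S\cap T|$. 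Submodularity of the edge-boundary function $d(\cdot)$ gives $d(A)+d(D)\le 10$ and $d(B)+d(C)\le 10$, while Observation \ref{nontrivial cyclic 5-edge-cut cor} (each side of $S$ is $2$-connected) is used to guarantee that the relevant corners carry cycles. Then $c\lambda(F)=5$ forces each cyclic corner-cut to have at least five edges, so equality propagates through the submodular inequalities and yields $[A,D]=[B,C]=\emptyset$; thus $S$ does not cross any transversal cut, which pins $S$ down to a single cross-section, i.e. a transversal cut.

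For Part 2 I may assume the two cuts are $S$ (nontrivial, hence transversal by the lemma) and $S'$ a trivial cyclic $5$-edge-cut, i.e. the five spokes of a pentagon $P$; the nontrivial--nontrivial case is covered by Part 1 with zero common edges. Since $P$ is a $5$-cycle and no edge of a transversal cut is a pentagon edge, $P$ lies entirely on one side of $S$, so $S\cap S'$ consists precisely of the spokes of $P$ that cross $S$. Every pentagon of $F$ sits inside one of the two pentacaps, and only a pentagon incident to the cross-section of $S$ can contribute a crossing spoke; a direct inspection of the pentacap (each of its five outer pentagons meets the boundary cross-section in exactly one spoke, while the central pentagon meets it in none, and deeper cross-sections meet no pentagon at all) shows at most one spoke of $P$ crosses $S$. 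Hence $|S\cap S'|\le 1$, completing Part 2.

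The main obstacle is the classification lemma. The uncrossing computation is routine once one knows which of the four corners $A,B,C,D$ contain cycles; the delicate point is exactly this, because a cycle of $X$ may straddle $A$ and $B$ through the edges of $[A,B]$ rather than lying inside a single corner. Ruling out such straddling cycles---so that the $2$-connectivity from Observation \ref{nontrivial cyclic 5-edge-cut cor} together with the girth-five and $c\lambda(F)=5$ constraints can be applied corner by corner---is where the real work lies, and I expect to spend most of the effort there.
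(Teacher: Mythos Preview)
The paper gives no proof of this statement: it is labelled an \emph{Observation} and is asserted directly ``from the structure of fullerenes admitting a nontrivial cyclic $5$-edge-cut'', i.e.\ from the explicit picture in Fig.~1 (right). So there is nothing to compare your argument against except the implicit claim that the result is evident by inspection once the tube structure is in hand.

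Your overall plan is correct, and your classification lemma (every nontrivial cyclic $5$-edge-cut is a transversal cut) is exactly the right intermediate statement. Where you diverge from the paper's spirit is in \emph{how} you propose to prove the lemma. The uncrossing/submodularity route is heavier machinery than the situation calls for: since Theorem~2.3 hands you the entire graph explicitly (two pentacaps joined by a length-$5$ cylinder of hexagons), you can simply read off all the $5$-edge-cuts directly from the picture. In particular, the transversal cuts are visibly the only nontrivial cyclic $5$-edge-cuts, they are visibly pairwise disjoint, and each pentagon visibly sends at most one spoke across any given transversal cut. That is the argument the paper has in mind by calling it an observation. Your Part~2 analysis already does exactly this kind of inspection; doing the same for the classification lemma would finish the whole thing with far less effort.

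As for the uncrossing argument itself: the difficulty you flag at the end is genuine and not merely cosmetic. Even if you establish that each corner $A,B,C,D$ has $d(\cdot)\ge 5$ (which you cannot, since a corner might be a forest), submodularity only yields $[A,D]=[B,C]=\emptyset$, i.e.\ $S\cap T=\emptyset$; it does not by itself force a corner to be empty. You would then need the parity observation that $|S|=|[A,C]|+|[B,D]|$ with $|[A,C]|=|[B,D]|$ (from $d(A)=d(B)=d(C)=d(D)=5$), contradicting $|S|=5$. And before that you must handle the forest-corner cases separately. All of this can be made to work, but it is a substantial detour compared to the one-line inspection the explicit structure affords.
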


We can  see the following basic fact:

\begin{Lemma}\label{3-edge-cut}
 Every 3-edge-cut of a fullerene graph $G$ is trivial.
\end{Lemma}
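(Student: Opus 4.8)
The plan is to combine the cubic structure of $G$ with the cyclic edge connectivity bound $c\lambda(G)=5$ supplied by Theorem \ref{clic is 5}. Let $S=[X,\overline{X}]$ be an arbitrary $3$-edge-cut of $G$. Since $|S|=3<5$, $S$ cannot be a cyclic edge-cut, so by definition at most one of the two sides can contain a cycle; hence at least one of the induced subgraphs $G[X]$, $G[\overline{X}]$ is acyclic. Relabelling if necessary, I assume $G[X]$ is a forest.

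Next I would run a degree-counting argument. Because $G$ is cubic, summing degrees over $X$ gives $3|X|=2e(X)+|S|=2e(X)+3$, where $e(X)$ denotes the number of edges inside $X$. Writing $c$ for the number of connected components of the forest $G[X]$, we have $e(X)=|X|-c$. Substituting yields $3|X|=2(|X|-c)+3$, that is $|X|=3-2c$. Since $|X|\ge 1$ and $c\ge 1$, the only possibility is $c=1$ and $|X|=1$.

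Thus $X$ consists of a single vertex $v$, and the three edges of $S$ are exactly the edges incident with $v$; that is, $S$ is trivial. I expect the only delicate point to be the justification that one side of a non-cyclic $3$-edge-cut must be genuinely acyclic (rather than merely failing to be separated by a cycle); this follows directly from the definition of a cyclic edge-cut, since two cycle-containing components, one lying inside $X$ and one inside $\overline{X}$, would force $S$ to be cyclic and hence $|S|\ge 5$, contradicting $|S|=3$. Everything else is an elementary counting computation, so I anticipate no substantive obstacle beyond setting up this case correctly.
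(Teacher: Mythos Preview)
Your proof is correct and uses essentially the same idea as the paper's: combine $c\lambda(G)=5$ from Theorem~\ref{clic is 5} with an edge count in the cubic graph to force one side of the cut to be a single vertex. The only difference is organizational---the paper argues by contradiction (if both $|X|,|\overline{X}|\ge 2$ then $e(X)=\tfrac{3|X|-3}{2}>|X|-1$ forces cycles on both sides), whereas you take the contrapositive and compute $|X|=3-2c$ directly from the forest side; your handling of the component count $c$ is a nice touch but ultimately unneeded, since $3$-connectedness already forces $G[X]$ to be connected.
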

\begin{proof}
Let $E=[X,\overline{X}]$ be a 3-edge-cut of $G$, where
$\emptyset\neq X\subset V(G)$, $\overline{X}=V(G)\setminus
X$. Denote by $G[X]$ (resp. $G[\overline{X}]$) the graph induced by
$X$ (resp. $\overline{X}$) in $G$. Since $G$ is 3-connected, $G[X]$ and $G[\overline{X}]$ are components of $G-E$.

Suppose that $X$ and $\overline{X}$ both have at least two vertices. Then we have that $\frac{3|X|-3}{2}>\frac{2(|X|-1)}{2}=|X|-1$
and $\frac{3|\overline{X}|-3}{2}>\frac{2(|\overline{X}|-1)}{2}=|\overline{X}|-1$,
which implies that $G[X]$ and $G[\overline{X}]$ both contain a cycle, contradicting that the
cyclic edge connectivity of $G$ is 5. So  $X$ or
$\overline{X}$ is a singleton.
\end{proof}

Using this lemma, we could show the following result.

\begin{Lemma}\label{the conment edges in cyclic 5-edge-cuts 2}
 Let $F$ be a fullerene. Then there is  at most one common edge for any
two cyclic 5-edge-cuts.
\end{Lemma}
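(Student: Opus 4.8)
The plan is to reduce to the one case not already covered by Observation~\ref{the conment edges in cyclic 5-edge-cuts 1}. Since every cyclic $5$-edge-cut of a fullerene is either trivial or nontrivial, and Observation~\ref{the conment edges in cyclic 5-edge-cuts 1} settles every pair of cyclic $5$-edge-cuts at least one of which is nontrivial, it suffices to prove the assertion when both cuts $S_1,S_2$ are \emph{trivial}. Recall from Theorem~\ref{clic is 5} and the remark after it that every $5$-cycle of $F$ bounds a pentagonal face, so a trivial cyclic $5$-edge-cut is exactly the set of five edges leaving the vertex set of some pentagon. Hence I may write $S_i=[V(P_i),\overline{V(P_i)}]$ for distinct pentagons $P_1,P_2$, and a common edge of $S_1,S_2$ is precisely an edge of $F$ joining a vertex of $V(P_1)\setminus V(P_2)$ to a vertex of $V(P_2)\setminus V(P_1)$. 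Writing $k$ for the number of such edges, the goal is to show $k\le1$.

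First I would record how two pentagons can meet. As $F$ is a $3$-connected plane cubic graph, a short local degree count shows $P_1,P_2$ are either vertex-disjoint or share exactly one edge (sharing a single vertex is impossible at degree three, and two faces of a $3$-connected plane graph share at most one edge). I then treat both cases uniformly: set $X=V(P_1)\cup V(P_2)$ and compute $|[X,\overline X]|$ from $\sum_{v\in X}\deg v=3|X|$ and the number of edges inside $X$. In the shared-edge case $|X|=8$ with $9+k$ interior edges, so $|[X,\overline X]|=6-2k$; in the disjoint case $|X|=10$ with $10+k$ interior edges, so $|[X,\overline X]|=10-2k$. Since $n\ge20$, we have $\overline X\neq\emptyset$ in both cases.

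Next I would eliminate the large values of $k$ by connectivity. A fullerene is $3$-connected, hence $3$-edge-connected, so no edge-cut with both sides nonempty has fewer than three edges; this rules out $|[X,\overline X]|\le2$, that is, $k\ge2$ in the shared-edge case and $k\ge4$ in the disjoint case. For $k=3$ in the disjoint case the cut has four edges: here $G[X]$ contains the two pentagonal cycles, and an average-degree estimate (the degrees inside $G[\overline X]$ sum to $3|\overline X|-4$, and $|\overline X|=n-10\ge10$ forces average degree $>2$) shows $G[\overline X]$ contains a cycle as well, so $[X,\overline X]$ is a cyclic $4$-edge-cut, contradicting Theorem~\ref{clic is 5}.

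The remaining and hardest case is $k=2$ in the disjoint case, where the cut has six edges and the cyclic edge-connectivity gives no contradiction. Here I would argue directly in the plane with the two joining edges $e=u_1v_1$ and $f=u_2v_2$ ($u_i\in V(P_1)$, $v_i\in V(P_2)$). Taking the shorter arc of $P_1$ between $u_1,u_2$ and of $P_2$ between $v_1,v_2$ yields a cycle $C$ through $e$ and $f$ of length $a+b+2$ with $a,b\in\{1,2\}$. Length $4$ contradicts the girth $5$; for lengths $5$ and $6$ the cycle $C$ bounds a face (again by the remark after Theorem~\ref{clic is 5}) yet shares a path of at least two edges with $P_1$ or with $P_2$, contradicting that two distinct faces of a $3$-connected plane graph share at most one edge. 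Thus $k\le1$ in every case, and together with Observation~\ref{the conment edges in cyclic 5-edge-cuts 1} this gives the lemma. I expect this $k=2$ planar argument to be the main obstacle, as it is the single configuration whose cut size ($6$) is permitted by the cyclic edge-connectivity, so one must instead exploit planarity, the girth, and the face-sharing property.
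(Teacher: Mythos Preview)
Your argument is correct, and it takes a genuinely different route from the paper's. After the common reduction via Observation~\ref{the conment edges in cyclic 5-edge-cuts 1} to two trivial cuts bounding pentagons $P_1,P_2$, the paper analyzes the plane subgraph $P_1\cup P_2+\{e_1,\dots,e_k\}$: between consecutive connecting edges it finds $k$ ``regions'' $R_i$, shows (via Lemma~\ref{3-edge-cut}) that at least three edges of $F$ enter each $R_i$, and then a vertex count on the ten vertices of $P_1\cup P_2$ forces $k=2$ with a single interior vertex in each region, yielding a $12$-vertex graph and contradicting $n\ge20$. You instead compute $|[X,\overline X]|$ for $X=V(P_1)\cup V(P_2)$, which immediately disposes of the shared-edge case and of $k\ge3$ in the disjoint case by $3$-edge-connectivity and $c\lambda(F)=5$; for the residual $k=2$ you build a cycle of length at most $6$ through the two connectors and the short pentagon arcs, and finish with the girth and the face-sharing property of $3$-connected plane graphs. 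Your approach has the advantage of treating the shared-edge case explicitly (the paper's region picture tacitly assumes disjoint pentagons) and of reducing the hard case to two standard planar facts already recorded after Theorem~\ref{clic is 5}; the paper's approach is more uniform across all $k\ge2$ and pins down the unique extremal configuration (the $12$-vertex object in Fig.~\ref{1}). One small point worth making explicit in your write-up: your description of $S_1\cap S_2$ as edges from $V(P_1)\setminus V(P_2)$ to $V(P_2)\setminus V(P_1)$ also requires ruling out edges from $V(P_1)\cap V(P_2)$ to $\overline X$, which follows once you have established that shared vertices have all three neighbours inside $X$.
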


\begin{proof}
By Observation \ref{the conment edges in cyclic 5-edge-cuts 1}, we only need to show that
there is at most one common edge for any two trivial cyclic 5-edge-cuts.

To the contrary, suppose that there are two trivial cyclic 5-edge-cuts $S_{1}$ and $S_{2}$ which share at least two common edges which connect two pentagons  $P_{1}$ and $P_{2}$. Denote such common edges by $e_1$, $e_{2}$,\ldots,$e_k$, $2\le k\le 5$, in a consecutive order along the boundary of $P_1$ or $P_2$.  Then the plane subgraph $P_1\cup P_2+\{e_1,e_2,\ldots,e_k\}$ of $F$ has exactly $k$ faces except $P_1$ and $P_2$. Let $R_i$ denote  such a face such that two  consecutive edges $e_i$ and $e_{i+1}$ lie on its boundary $C_i$, for $1\leq i\le k$, where the subscript always modulo $k$.   Then $C_i$ has no chords (a chord of a cycle $C$ means an edge not in $C$ but both end-vertices in $C$). Since $R_i$ is not a face of $F$, $F$ has at least three edges issuing  from distinct vertices on $C_i$ into $R_i$.  This implies that in addition to the four endvertices of $e_i$ and $e_{i+1}$, $C_i$ has at least three additional vertices. Note $P_1$ and $P_2$ have totally ten vertices. We must have that $k=2$ and there are exactly three edges issuing  from  $C_i$ to the same vertex in $R_i$ for $i=1$ and $2$ by Lemma \ref{3-edge-cut} (see Fig. \ref{1}). Thus  $F$ has  12 vertices, contradicting  that  any fullerene has at least 20 vertices.
\end{proof}
\begin{figure}[htbp]
\begin{center}
\includegraphics[totalheight=3cm]{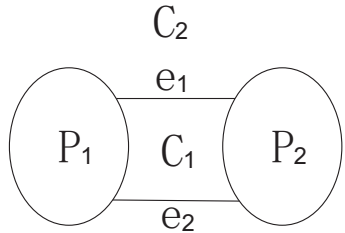}
\caption{\label{1}\small{The illustration for the proof of Lemma
\ref{the conment edges in cyclic 5-edge-cuts 2}.}}
\end{center}
\end{figure}

Similar to Lemma \ref{3-edge-cut} we have the following result.
\begin{Lemma}\label{4-edge-cut}
 Every 4-edge-cut of a fullerene graph $G$ isolates an edge.
\end{Lemma}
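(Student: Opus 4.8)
The plan is to mimic the proof of Lemma \ref{3-edge-cut}, but to replace its reliance on the cut being a minimal bond with a forest-counting argument that remains valid even when the two sides of the cut are disconnected. Write the given $4$-edge-cut as $E=[X,\overline{X}]$ with $\emptyset\neq X\subsetneq V(G)$. First I would observe that, since $c\lambda(G)=5>4$ by Theorem \ref{clic is 5}, the cut $E$ cannot be cyclic; hence at most one component of $G-E$ contains a cycle. In particular $G[X]$ and $G[\overline{X}]$ cannot both contain a cycle, so at least one of them, say $G[X]$, is a forest.

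Next I would count the edges of $G[X]$ in two ways. Because $G$ is cubic, summing degrees over $X$ gives $2\,e(G[X])+|E|=3|X|$, so $e(G[X])=\frac{3|X|-4}{2}$. On the other hand, a forest on $|X|$ vertices with $c\ge 1$ components has exactly $|X|-c$ edges. Equating the two expressions yields $|X|=4-2c$. Since $X\neq\emptyset$ forces $|X|\ge 1$ and $c\ge 1$, the only possibility is $c=1$ and $|X|=2$; thus $G[X]$ is a connected forest on two vertices, i.e. a single edge. Hence $E$ isolates that edge, as claimed. The symmetric case in which $G[\overline{X}]$ is the forest is identical.

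The step I expect to require the most care is the very first one: unlike a $3$-edge-cut in a $3$-edge-connected cubic graph, a $4$-edge-cut need not be a minimal bond, so $G[X]$ (or $G[\overline{X}]$) may fail to be connected and $G-E$ may have more than two components. This is precisely why I would avoid the component-based phrasing of Lemma \ref{3-edge-cut} and instead track the number of components $c$ explicitly through the term $|X|-c$; the integrality constraint $|X|=4-2c$ with $|X|,c\ge 1$ then does all the work, and it simultaneously rules out the spurious configuration of two non-adjacent vertices ($|X|=2$, $c=2$), which would give $e(G[X])=0\neq 1$. One could also note in passing that both sides cannot be forests, since that would force $|V(G)|=4$, contradicting that every fullerene has at least $20$ vertices; but this observation is not needed, as the conclusion follows the moment one side is shown to be acyclic.
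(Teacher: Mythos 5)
Your proof is correct, and it rests on the same two pillars as the paper's: the cubic degree count $e(G[X])=\frac{3|X|-4}{2}$ and the fact that $c\lambda(G)=5$ rules out a cyclic $4$-edge-cut. The genuine difference is in how connectivity of the sides is handled. The paper first asserts, from $3$-connectivity, that $G[X]$ and $G[\overline{X}]$ are connected (the two components of $G-E$), and then argues by contradiction: if both sides had at least three vertices, the count would force a cycle on each side, making $E$ a cyclic $4$-edge-cut. You instead apply $c\lambda(G)=5$ at the outset to get that one side is a forest, and let the forest count $|X|-c$ absorb any possible disconnectedness; the integrality constraint $|X|=4-2c$ with $|X|,c\geq 1$ then yields $|X|=2$, $c=1$ directly. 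What your route buys is that you never need the paper's connectedness claim, which the paper states without justification; in fact that claim does hold (if $G[X]$ had $c\geq 2$ components, each would send at least $3$ edges into $\overline{X}$, since a $3$-connected cubic graph is $3$-edge-connected, giving $|E|\geq 6>4$), so your worry about non-minimal bonds is moot in this setting, but sidestepping it makes your argument self-contained and slightly more robust. What the paper's route buys is brevity once the connectedness claim is granted, treating both sides symmetrically in a single counting step. Both arguments are sound and reach the same conclusion that $E$ isolates an edge.
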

\begin{proof}Let $E=[X,\overline{X}]$ be a 4-edge-cut of $G$, where
$\emptyset\neq X\subset V(G)$, $\overline{X}=V(G)\setminus
X$. Denote by $G[X]$ (resp. $G[\overline{X}]$) the graph induced by
$X$ (resp. $\overline{X}$) in $G$. Since $G$ is 3-connected, $G[X]$ and $G[\overline{X}]$ are components of $G-E$.

If both $X$ and $\overline{X}$ have at least three vertices, then we have that $\frac{3|X|-4}{2}>|X|-1$
(similarly for  $\overline{X}$),
which implies that $G[X]$ and $G[\overline{X}]$ both contain a cycle, contradicting that $c\lambda(G)=5$. So  $X$ or
$\overline{X}$ has two vertices.
\end{proof}

An edge of a graph $G$ is said to be a {\em pendent edge} if it has an endvertex of degree one in $G$. Such an  endvertex is called a pendent vertex. An edge $e$ of $G$ is called a {\em bridge} if deleting $e$ from $G$
increases the number of components.

\begin{thm}\label{unique matching}
\cite{A.Kotzig} Let G be a connected graph with a unique perfect
matching. Then G has a bridge belonging to the perfect matching.
\end{thm}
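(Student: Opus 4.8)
The plan is to argue by contradiction using the standard reformulation of uniqueness in terms of alternating cycles. First I would record the basic dictionary: a perfect matching $M$ of $G$ is the unique one if and only if $G$ contains no $M$-alternating cycle. Indeed, given a second perfect matching $M'$, the symmetric difference $M\triangle M'$ is a nonempty disjoint union of $M$-alternating cycles, and conversely swapping the roles of matching and non-matching edges along such a cycle produces a perfect matching different from $M$. Assuming $M$ is the unique perfect matching, suppose for contradiction that no edge of $M$ is a bridge. Then every vertex has degree at least two: a vertex of degree one would be matched by its only incident edge, and that edge would be a bridge. The goal is then to manufacture an $M$-alternating cycle, contradicting uniqueness.

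The main tool I would use is a longest $M$-alternating path $P=v_0v_1\cdots v_k$, meaning consecutive edges alternate between $M$ and $E(G)\setminus M$. The cleanest step is to dispose of the case where $P$ begins with a non-matching edge $v_0v_1\notin M$. Here $v_0$ is matched by some edge $v_0z\in M$; if $z$ were off $P$ we could prepend $v_0z$ to obtain a longer alternating path, so $z=v_j$ lies on $P$, and $j\geq 2$. The chord $v_0v_j\in M$ then closes the subpath $v_0v_1\cdots v_j$ into a cycle, and this cycle is automatically $M$-alternating: at $v_0$ the two cycle-edges are $v_0v_1\notin M$ and $v_0v_j\in M$, while at $v_j$ the edge $v_jv_0$ is precisely the matching edge of $v_j$, forcing its other cycle-edge $v_{j-1}v_j$ out of $M$. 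This alternating cycle contradicts uniqueness, so every longest $M$-alternating path must begin, and by the symmetric argument end, with a matching edge.

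It remains to derive a contradiction when $P$ begins and ends with matching edges. Since $v_0v_1\in M$ is not a bridge, $v_0$ has a neighbour $w\neq v_1$ with $v_0w\notin M$; maximality of $P$ forces $w=v_i$ on $P$, giving a chord $v_0v_i\notin M$. If the index $i$ is odd, then $v_{i-1}v_i\in M$ and the cycle $v_0v_1\cdots v_iv_0$ is $M$-alternating, again a contradiction. The hard part will be the remaining parity: when $i$ is even the closing chord fails to alternate at $v_i$. I would handle this by a rerouting argument, replacing $P$ with the equally long alternating path obtained by splicing in the chord $v_0v_i$, namely $v_{i-1}v_{i-2}\cdots v_0v_iv_{i+1}\cdots v_k$, and then iterating under a suitable extremal choice of $P$ so that the process must terminate in a chord of admissible parity and hence an $M$-alternating cycle. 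This parity bookkeeping is the only genuinely delicate point; everything else is the alternating-path/alternating-cycle dictionary together with the elementary consequences of the no-bridge assumption.
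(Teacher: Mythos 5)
The paper itself gives no proof of this theorem --- it is quoted as a classical result of Kotzig --- so your attempt has to stand on its own. Your preliminary dictionary (uniqueness of $M$ $\Leftrightarrow$ no $M$-alternating cycle), your Case A (a longest $M$-alternating path beginning with a non-matching edge yields an alternating cycle), and the odd-parity subcase of Case B are all correct. The problem is the even-parity subcase, which you defer to ``a rerouting argument\ldots iterating under a suitable extremal choice of $P$ so that the process must terminate in a chord of admissible parity.'' This is not delicate bookkeeping; it is the crux of the theorem, and the scheme as you describe it provably cannot be completed from the facts you actually invoke. At that point you have used the no-bridge hypothesis only through ``$v_0$ has a second neighbour,'' i.e.\ minimum degree at least $2$. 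Consider the graph made of two triangles $abc$ and $xyz$ joined by the edge $cx$: it has minimum degree $2$ and the unique perfect matching $M=\{ab,cx,yz\}$. The longest $M$-alternating path is $P=abcxyz$; the only chord at the endpoint $a$ is $ac$, landing at even position $2$, and splicing it in gives $P'=bacxyz$, whose only endpoint chord $bc$ again lands at position $2$; rerouting once more returns you to $P$. So the iteration loops forever, every extremal choice of longest path fails, and indeed no $M$-alternating cycle exists (here $cx$ is a bridge lying in $M$). This witness shows that no parity argument of the kind you sketch can close Case B: any correct completion must use that the matching edge $v_0v_1$ itself lies on a cycle (the full no-bridge hypothesis), not merely that $v_0$ has degree at least $2$, and nothing in your sketch engages with that.

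A standard way to finish is exactly such a reduction: assuming no edge of $M$ is a bridge, pass to a leaf block (or leaf $2$-edge-connected component of the bridge tree) $H$ of $G$; since the bridge attaching $H$ to the rest of $G$ is not in $M$, the restriction of $M$ to $H$ is a perfect matching of $H$, and it is unique, for otherwise one could swap inside $H$ to get a second perfect matching of $G$. One is thus reduced to showing that a connected bridgeless graph on at least two vertices cannot have a unique perfect matching, and there the hypothesis that \emph{every} edge lies on a cycle is available and does real work. As it stands, your proposal proves the easy half of the argument and leaves the essential step not only unproved but resting on a mechanism that the example above rules out.
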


From the theorem, it is clear that if a 2-edge connected graph contains a perfect matching,
then it has at least two perfect matchings.

\section{The lower bound of anti-forcing number of fullerenes}

\setlength{\unitlength}{1cm}

A lower bound of
the minimum forcing number of fullerenes is stated in the following.

\begin{thm}\label{focing matching}
\cite{fcf} Let F be a fullerene graph. Then $f(F)\geq 3$.
\end{thm}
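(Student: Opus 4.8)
The plan is to reformulate forcing in terms of uniquely matchable subgraphs and then run a small edge-cut analysis. First I would record the standard reduction: for $S\subseteq M$, the perfect matchings of $F$ that contain $S$ are in bijection with the perfect matchings of $H:=F-V(S)$ (delete the endvertices of the edges in $S$) via $M'\mapsto M'\setminus S$. Hence $S$ is a forcing set of $M$ if and only if $H$ has a \emph{unique} perfect matching. Consequently $f(F)\ge 3$ is equivalent to saying that for every perfect matching $M$ and every $S\subseteq M$ with $|S|\le 2$, the graph $F-V(S)$ has at least two perfect matchings. Since $F-V(S)$ already has the perfect matching $M\setminus S$, the remark following Theorem~\ref{unique matching} shows it suffices to prove that $F-V(S)$ is $2$-edge-connected; and in the cases where it is not, I must produce a second perfect matching by hand.

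Assume for contradiction that some $S=\{e_1,e_2\}\subseteq M$ is forcing (the subcases $|S|\le 1$ being easier warm-ups treated the same way), and set $R=V(S)=\{v_1,v_2,v_3,v_4\}$ and $H=F-R$. Then $H$ has a unique perfect matching, so by the remark after Theorem~\ref{unique matching} it is \emph{not} $2$-edge-connected: it is either disconnected or has a bridge. In either case there is a bipartition $V(H)=A\sqcup B$ with $A,B\neq\emptyset$ and at most one edge of $H$ joining $A$ to $B$. The strategy is to promote this weak cut of $H$ into a genuinely small edge-cut of $F$ and then invoke the structural lemmas. Because $F$ is cubic and $3$-connected, it is $3$-edge-connected, so $|[A,V(F)\setminus A]|\ge 3$ and $|[B,V(F)\setminus B]|\ge 3$; since at most one crossing edge lies inside $H$, each of $A$ and $B$ sends at least two edges into $R$. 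On the other hand $R$ has total degree $12$ and contains at least the two internal edges $e_1,e_2$, so it emits at most $8$ edges into $H$. These two constraints leave only a handful of ways to distribute $R$'s edges between $A$ and $B$.

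Next I would examine the two edge-cuts $[A,\,B\cup R]$ and $[B,\,A\cup R]$ of $F$, whose sizes equal $|[A,B]_H|+|[A,R]|$ and $|[A,B]_H|+|[B,R]|$ respectively. The counting above forces the smaller of them to have size $3$, $4$, or $5$. A cut of size $3$ is trivial by Lemma~\ref{3-edge-cut} and so isolates a single vertex; a cut of size $4$ isolates a single edge by Lemma~\ref{4-edge-cut}; and a cut of size $5$ is, by Theorem~\ref{clic is 5}, either not cyclic (hence one side contains no cycle and is small) or a cyclic $5$-edge-cut. Pursuing these alternatives, the side $A$ (or $B$) is driven down to a single vertex or a single edge; then $|V(F)|=|A|+|B|+4$ falls below $20$, or a parity clash appears since $H$ must have even order, and in either case I contradict $F$ being a fullerene. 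The routine part is matching the arithmetic of the cut sizes to Lemmas~\ref{3-edge-cut} and~\ref{4-edge-cut}. The main obstacle is the size-$5$ case, where a cut need not be trivial: there I expect to rely on Observation~\ref{nontrivial cyclic 5-edge-cut cor} and the pentacap description, which guarantee that each side of a nontrivial cyclic $5$-edge-cut is $2$-connected and large, contradicting the collapse of a side to a vertex or an edge, together with the disconnected-$H$ subcase in which $R$'s at most eight outgoing edges must be shared among three or more components.
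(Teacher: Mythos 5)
A preliminary remark: the paper never proves Theorem \ref{focing matching} at all---it is quoted from \cite{fcf}---so your argument has to stand entirely on its own. Much of it does: the reduction (a set $S\subseteq M$ forces $M$ if and only if $H=F-V(S)$ has a unique perfect matching), the appeal to Theorem \ref{unique matching}, and the edge count ($R=V(S)$ sends at most $8$ edges into $H$, while each side of the bridge-bipartition $V(H)=A\sqcup B$ receives at least $2$ of them) are all sound. They do settle the warm-up $|S|\le 1$, the disconnected-$H$ case (at most two components, each cut off by exactly $4$ edges, hence $|V(F)|=8<20$), and every subcase in which \emph{both} cuts $[A,B\cup R]$ and $[B,A\cup R]$ have size at most $5$: there Lemmas \ref{3-edge-cut} and \ref{4-edge-cut}, Theorem \ref{clic is 5}, Observation \ref{nontrivial cyclic 5-edge-cut cor}, and parity (each of $|A|,|B|$ is odd since the bridge lies in the matching, and $|[R,V(H)]|=12-2|E(F[R])|$ is even) either shrink both sides, giving $|V(F)|\le 14<20$, or yield a second perfect matching of $H$.

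The genuine gap is that your counting bounds only the \emph{sum} of the two cut sizes (by $10$), not each of them, and the surviving case of cut sizes $(3,7)$ is untouched: $A=\{u\}$ is a single vertex joined to the bridge and to one endvertex of each of $e_1,e_2$ (girth $5$ forbids two neighbours on the same $e_i$), while $B$ is arbitrarily large with $|[B,R]|=6$. A $7$-edge-cut is subject to no structural lemma available here, no parity obstruction arises ($|A|=1$ and $|B|$ are odd, $|[R,V(H)]|=8$ is even), and no vertex count drops below $20$; so your claimed contradiction ``$|V(F)|=|A|+|B|+4<20$ or a parity clash'' simply does not follow in this case. Note that this configuration is precisely a pendant edge $uw$ of $F-V(S)$, i.e.\ the very situation the paper must confront when proving its anti-forcing bound: a single application of Kotzig's theorem never suffices, and one has to iterate---delete the matched pendant pair, enlarge $R$ to $R\cup\{u,w\}$ (which again emits at most $8$ edges), and re-run the analysis, together with an argument about how the recursion terminates (the analogue of Theorem \ref{F' structure}, where the unstripped remainder is shown to be empty or two pentagons joined by an edge). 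That recursive machinery, which is where the real substance of such proofs lies, is absent from your proposal, so the argument as written is incomplete.
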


Now we state a lower bound of anti-forcing number of
fullerenes as follows.

\begin{thm}\label{lower bond }
For a fullerene graph $F$,  $af(F)\geq 4$.
\end{thm}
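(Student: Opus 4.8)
The plan is to argue by contradiction. Suppose $af(F)\le 3$, so there is a set $S\subseteq E(F)$ with $|S|\le 3$ for which $F-S$ has a unique perfect matching $M$. Since $M\subseteq E(F)\setminus S$ and $V(F-S)=V(F)$, the set $M$ is simultaneously a perfect matching of $F$ and satisfies $S\cap M=\emptyset$. Because every vertex of $F-S$ is saturated by $M$, each component of $F-S$ has at least two vertices and inherits a unique perfect matching, so Theorem~\ref{unique matching} applied to one such component supplies a bridge $e=uv$ of $F-S$ with $e\in M$. This bridge is the engine of the whole argument.

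The first substantive step is to read off the edge-cut of $F$ that the bridge determines. Deleting $e$ separates the component containing it; collecting the two resulting sides (and distributing the remaining components arbitrarily) into $A\ni u$ and $B\ni v$ yields a partition with $[A,B]_{F-S}=\{e\}$, whence $[A,B]_F=\{e\}\cup(S\cap[A,B])$. As $F$ is cubic and $3$-connected it is $3$-edge-connected, so $3\le|[A,B]_F|\le 1+|S|\le 4$. Therefore either $|[A,B]_F|=3$, in which case Lemma~\ref{3-edge-cut} forces the cut to be trivial with one side a single vertex, or $|[A,B]_F|=4$, in which case Lemma~\ref{4-edge-cut} makes one side a single edge.

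I would dispatch the $4$-cut case first. The small side is a pair $\{a_1,a_2\}$ joined by an edge $a_1a_2$, and the three cut-edges other than $e$ all lie in $S$; hence the endpoint of this pair not meeting $e$ retains only $a_1a_2$ in $F-S$. That makes $a_1a_2$ a forced pendant matching edge, which saturates both $a_1$ and $a_2$ and forces $e\notin M$, contradicting $e\in M$. For the $3$-cut case the singleton side must be $\{u\}$; the two edges of $S$ incident with $u$ make $u$ a pendant vertex matched by $e=uv$, leaving at most one further edge $g$ in $S$. I would then pass to $F''=F-\{u,v\}$, noting that $M\setminus\{e\}$ is a perfect matching of $F''$ disjoint from $S$, so uniqueness of $M$ forces $F''-(S\cap E(F''))$ to have a unique perfect matching as well.

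The crux is showing this last situation cannot occur. The key observation is that for any partition $P\cup Q$ of $V(F'')=V(F)\setminus\{u,v\}$ one has $[P,Q]_{F''}=[P,Q]_F$, since none of the edges counted is incident with $u$ or $v$; hence deleting the at most one remaining edge $g$ leaves every such cut of size at least $3-1=2$, so $F''-(S\cap E(F''))$ is $2$-edge-connected. Carrying a perfect matching, it then has at least two perfect matchings by the consequence of Theorem~\ref{unique matching} noted above, and appending the pendant edge $e$ produces two perfect matchings of $F-S$, the desired contradiction. I expect this reduction — verifying that removing the two forced vertices together with one edge cannot destroy $2$-edge-connectivity — to be the only delicate point; the rest is bookkeeping on the two cut sizes. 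It is worth remarking that Theorem~\ref{2-entend} and the girth information are not needed here, the proof resting entirely on $3$-edge-connectivity, Kotzig's bridge theorem, and the cut Lemmas~\ref{3-edge-cut} and \ref{4-edge-cut}.
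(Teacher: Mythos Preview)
Your argument in the $4$-edge-cut case is fine, but the $3$-edge-cut case contains a real gap. After passing to $F''=F-\{u,v\}$ you assert that for every partition $P\cup Q$ of $V(F'')$ one has $|[P,Q]_{F''}|=|[P,Q]_F|\ge 3$, and hence that $F''-g$ is $2$-edge-connected. The equality $[P,Q]_{F''}=[P,Q]_F$ is correct, but this set is \emph{not} an edge-cut of $F$: since $u,v\notin P\cup Q$, removing these edges from $F$ need not disconnect it, so $3$-edge-connectivity of $F$ gives no lower bound on $|[P,Q]_F|$. Concretely, take $P=\{u_1\}$ where $u_1\ne v$ is a neighbour of $u$; then $u_1$ has degree $2$ in $F''$ (the edge $u_1u$ has been deleted), so $|[P,Q]_{F''}|=2$. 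If the remaining edge $g\in S$ happens to be one of these two edges, then $F''-g$ has $u_1$ as a pendant vertex and is certainly not $2$-edge-connected. Thus the ``crux'' step, on which the whole reduction rests, does not go through as written.

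The paper treats precisely this situation (two edges of $S$ sharing a vertex) by a different mechanism: it uses $2$-extendability (Theorem~\ref{2-entend}) together with the forcing bound $f(F)\ge 3$ (Theorem~\ref{focing matching}) to produce two disjoint edges $e,e'$ that lie in at least two perfect matchings of $F$ and are chosen so that every perfect matching containing them automatically avoids all of $S$. So your closing remark that Theorem~\ref{2-entend} and the girth/forcing information are unnecessary is not borne out; something beyond $3$-edge-connectivity, the cut Lemmas, and Kotzig's theorem is needed to close the adjacent-edges case, and the paper's appeal to $2$-extendability plus $f(F)\ge 3$ is one clean way to supply it.
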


\begin{proof}We just show  that $F$ does not contain an anti-forcing set of three distinct edges.
Suppose to the contrary that $F$  contains an anti-forcing set
$S=\{e_{1},e_{2},e_{3}\}$. Then
$F-S$ has a unique perfect matching $M$. There are two cases to be considered.

{\bf Case 1.} There exist two adjacent edges in $S$. Suppose that $e_{1}$
and $e_{2}$ are incident to the same end-vertex of an edge $e$ in $M$. Then $F$
has a cycle $C$ containing both $e$ and $e_{3}$ since $F$ is
3-connected. Since $F$ is cyclically 5-edge connected, we have that
$C$ has length at least 5. So there exists an edge $e'$ of $C$
such that $e$ and $e'$ are disjoint and $e'$ has a common
end-vertex with $e_{3}$. By the 2-extendability of $F$ (Theorem \ref{2-entend}) and $f(F)\geq
3$ (Theorem \ref{focing matching}), $F$ has at least 2 perfect matchings containing both $e$ and $e'$.
So $F-S$ has at least 2
perfect matchings,  a contradiction.

{\bf Case 2.} Any two edges in $S$ are not adjacent.  By
Theorem \ref{unique matching}, $F-S$ has a bridge $e$ in $M$. Let
$S':=S\cup \{e\}$. Then $F-S'$ is not connected. We claim that $S'$ is a minimal edge-cut of $F$. Otherwise,
some three edges in $S'$ form a trivial edge-cut by Lemma  \ref{3-edge-cut}, a contradiction. Let
$S'=[X,\overline{X}]$, where $\emptyset\neq X\subset V(F)$, and
$\overline{X}=V(F)\setminus X$. Since any two edges in $S$ are not
adjacent,  $|X|\geq 3$ and $|\overline X|\geq 3$. This contradicts Lemma \ref{4-edge-cut}.
\end{proof}

%
%
%
%

Now we show that this bound is sharp. In Fig. \ref{Example}, the remaining graphs
 by deleting four edges $\{e_{1},e_{2},e_{3},e_{4}\}$ from
$F_{20}$ (left) and $F_{24}$ (right) have only one perfect
matching. So both $af(F_{20})$ and $af(F_{24})$ equal 4.
\begin{figure}[htbp]
\begin{center}
\includegraphics[totalheight=5 cm]{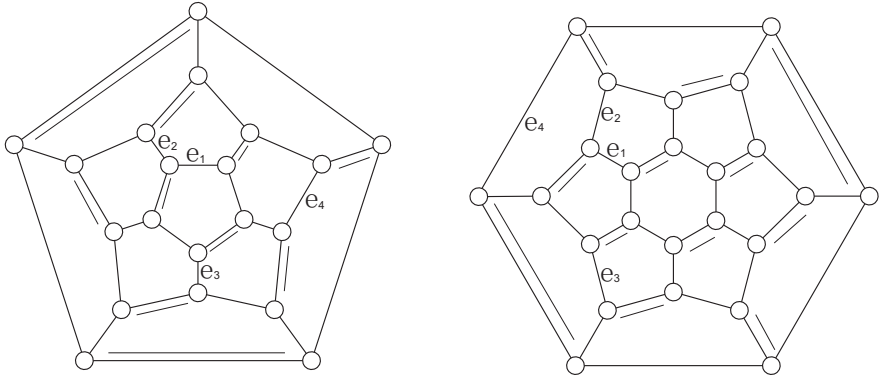}
\caption{\label{Example}\small{$F_{20}$ (left) and $F_{24}$ (right) have
an anti-forcing set of size  4.}}
\end{center}
\end{figure}

 Next, we study the structure of  anti-forcing sets of size  4.

\begin{thm}\label{anti set}
 Let $S$ be an anti-forcing set of a fullerene graph $F$ and $|S|=4$.  Then $S$ is not a matching, and
 $G[S]$ consists of  either
  a path of length 2 and two independent edges, or
  a path of length 3 and one independent edge.
 \end{thm}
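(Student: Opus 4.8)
The plan is to first pin down the possible shapes of $G[S]$ from local constraints, and then eliminate the unwanted ones family by family. Since $F$ is cubic with girth $5$ (a consequence of Theorem \ref{clic is 5}), the graph $G[S]$ has maximum degree at most $3$ and contains no cycle of length at most $4$; having only four edges it cannot contain a $5$-cycle either, so each of its components is a path. Moreover no vertex can have degree $3$ in $G[S]$: such a vertex would be incident in $F$ with three edges of $S$ and hence isolated in $F-S$, which is impossible since $F-S$ has a perfect matching $M$. Thus $G[S]$ is a linear forest with four edges, i.e. one of $P_5$, $P_4\cup K_2$, $P_3\cup P_3$, $P_3\cup 2K_2$, or $4K_2$. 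It then remains to rule out $P_5$, $P_3\cup P_3$ and $4K_2$; discarding $4K_2$ is exactly the claim that $S$ is not a matching, and the two surviving shapes $P_4\cup K_2$ and $P_3\cup 2K_2$ are the asserted ones.

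The main tool for the first two is a blocking argument. Call a matching $T$ of $F$ with $T\cap S=\emptyset$ a \emph{blocker} if every edge of $S$ shares an endpoint with some edge of $T$. If $|T|\le 2$, then by $2$-extendability (Theorem \ref{2-entend}) $F$ has a perfect matching containing $T$, and since $f(F)\ge 3$ (Theorem \ref{focing matching}) it has at least two of them (otherwise $T$ would be a forcing set of size at most $2$). Each such matching contains $T$ and therefore excludes every edge of $S$, so it is a perfect matching of $F-S$, contradicting the uniqueness of $M$. Hence no blocker of size at most $2$ can exist. The key observation is that every degree-$2$ vertex $z$ of $G[S]$ has its third incident edge $e_z$ forced into $M$, as $e_z$ is the only edge at $z$ in $F-S$. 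For $P_5=z_0z_1z_2z_3z_4$ the edges $e_{z_1},e_{z_3}$ form such a blocker, and for $P_3\cup P_3$ with centres $z_1,w_1$ the edges $e_{z_1},e_{w_1}$ do; should these coincide as the single edge joining $z_1$ and $w_1$, one uses that size-$1$ blocker together with $f(F)\ge 3$. This eliminates $P_5$ and $P_3\cup P_3$.

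The hard case, which I expect to be the main obstacle, is $4K_2$; here no vertex of $G[S]$ has degree $2$, so the forced-edge trick is unavailable and I would mimic Case $2$ of the proof of Theorem \ref{lower bond}. By Theorem \ref{unique matching} the graph $F-S$ has a bridge $e\in M$; put $S'=S\cup\{e\}$. Using Lemmas \ref{3-edge-cut} and \ref{4-edge-cut} one checks that $S'$ has no proper subcut: a size-$3$ subcut would be trivial and a size-$4$ subcut would isolate an edge, and either conclusion forces two edges of the matching $S$ to meet at a common vertex, which is absurd. Hence $S'=[X,\overline X]$ is a minimal $5$-edge-cut, both $F[X]$ and $F[\overline X]$ are connected, and all four edges of $S$ cross it. A degree count excludes an acyclic side (an acyclic side would be a tree on exactly three vertices, too few to receive the four distinct $S$-endpoints), so $S'$ is a cyclic $5$-edge-cut. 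Writing $e=uv$ with $u\in X$, $v\in\overline X$, the vertices $u,v$ have degree $2$ in their sides, and $M=\{e\}\cup M_X\cup M_{\overline X}$ with $M_X,M_{\overline X}$ perfect matchings of $F[X]-u$ and $F[\overline X]-v$.

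It then suffices to produce a second perfect matching on one side. If $S'$ is nontrivial, Observation \ref{nontrivial cyclic 5-edge-cut cor} gives that $F[X]-u$ and $F[\overline X]-v$ are $2$-connected, hence $2$-edge-connected; since each carries a perfect matching, the remark following Theorem \ref{unique matching} yields a second one, and thus a second perfect matching of $F-S$, a contradiction. If $S'$ is trivial, one side is a pentagon and the corresponding $F[\,\cdot\,]-u$ is a path with a unique matching, so the argument must come from the large side; the remaining point is to show that the pentagon-complement, with the appropriate degree-$2$ vertex deleted, is still $2$-edge-connected. I would establish this by showing that a bridge there would, through Lemmas \ref{3-edge-cut} and \ref{4-edge-cut}, create a cycle of length at most $4$ attached to the pentagon, contradicting the girth. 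With $2$-edge-connectivity in hand, the remark following Theorem \ref{unique matching} again supplies a second perfect matching, completing the proof. This $2$-edge-connectivity of the pentagon-complement in the trivial cyclic-cut case is the delicate step throughout.
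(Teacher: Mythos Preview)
Your overall strategy matches the paper's: first exclude the matching case via Kotzig's bridge plus an edge-cut analysis, then kill the remaining bad shapes using forced pendent edges together with $2$-extendability and $f(F)\ge3$. Your ``blocker'' formulation is exactly the paper's mechanism (the paper simply says the two pendent edges would form a forcing set of size $2$), and your treatment of the nontrivial cyclic $5$-edge-cut subcase via Observation~\ref{nontrivial cyclic 5-edge-cut cor} is identical to the paper's.

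The substantive divergence is the \emph{trivial} cyclic $5$-edge-cut subcase of $4K_2$. Your plan is to prove that $F[X]-u$ is $2$-edge-connected and then invoke Kotzig. The sketch you give (``a bridge would, through Lemmas~\ref{3-edge-cut} and~\ref{4-edge-cut}, create a cycle of length at most $4$'') does not close as stated. If the small side of the putative bridge-cut is a single vertex $w$ adjacent to $u$ and to a pentagon vertex $p$ at distance $2$ from $v$ along the pentagon, the cycle $u,v,q,p,w$ you obtain has length $5$, not $\le4$; to finish you would need the extra fact that in a $3$-connected cubic plane graph two faces share at most one edge (so that this $5$-cycle face cannot share two edges with the pentagon). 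For cuts of size $\ge5$ around $A$ you must pass to the complementary piece $B$ and run Lemmas~\ref{3-edge-cut}/\ref{4-edge-cut} there, which you have not indicated. You also implicitly assume $u$ has degree $2$ in $F[X]$, but nothing forbids $u$ from being incident with an edge of $S$, in which case $u$ has degree $1$ in $F[X]$ and the setup changes. So the step you yourself flag as delicate is genuinely incomplete as written.

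The paper bypasses all of this with a one-line reuse of your own blocker idea. If the small side is the pentagon $F_2$, take the two edges $e_a,e_b$ of $F_2$ lying in $M$: they are disjoint, not in $S$, and any perfect matching of $F$ containing them must match the fifth pentagon vertex via $e$ and hence avoids all four edges of $S$. By $2$-extendability and $f(F)\ge3$ there are at least two such matchings, contradicting uniqueness. This is precisely your blocker argument transplanted to the pentagon, and it replaces the whole $2$-edge-connectivity detour.
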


\begin{proof}

We first claim that $S$ possesses two adjacent edges in $F$. Suppose to the contrary that $S$ is a matching. Since
$S$ is an anti-forcing set, we have that  $F-S$ has a unique perfect matching
$M$. By  Theorem \ref{unique matching}, $F-S$ has a bridge
$e=v_{1}v_{2}$ in $M$. Let $S':=S\cup \{e\}$. So $F-S'$ is disconnected.  Similar to the proof (Case 2) of Theorem \ref{lower bond } we can obtain that  $S'$ is a minimal 5-edge-cut. So there exists a proper nonempty subset $X$ of $V(F)$ such that $S'=[X,\overline{X}]$, where
$\overline{X}=V(F)\setminus X$. Since $S$ is a matching,  $|X|\geq 4$ and $|\overline X|\geq 4$. So
$\frac{3|X|-5}{2}-(|X|-1)=\frac{|X|-3}{2}>0$ (similarly for $\overline X$). Thus $S'$
is a cyclic 5-edge cut. Let $F_{1}=F[X]$ and $F_{2}=F[\overline{X}]$ denote the two components of $F-S'$ such that $v_{1}\in F_{1}$ and  $v_{2}\in F_{2}$.

If $S'$ is a nontrivial cyclic 5-edge-cut, then by Observation \ref{nontrivial cyclic 5-edge-cut cor},
$F_{1}$ is 2-connected, $v_1$ has degree 2 in $F_1$, and
$F_{1}-v_{1}$ is 2-connected. By Theorem \ref{unique matching} $F_{1}-v_{1}$ has more than one perfect matching, contradicting that $F-S$ has a unique perfect matching.  If $S'$ is a trivial cyclic 5-edge-cut, then one of  $F_1$ and $F_2$, say $F_2$,
must be a pentagon. 
Let $e_a$ and $e_b$ be two edges of $F_2$ belonging to $M$. Sine $F$ is 2-extendable and $f(F)\geq 3$, then $\{e_a,e_b\}$ is not a forcing set of $M$. So $\{e_a,e_b\}$ is a subset of at least two perfect matchings of $F$. Thus  $F-S$
has at least 2 perfect matchings, a contradiction. The claim is verified.

Now suppose $F[S]$ (the subgraph induced by  $S$) contains two edge-disjoint paths of length 2. It is clear that $F-S$ contains two pendent edges that  together are adjacent all edges in $S$.
We can see  that these two pendent edges belong to $M$ and form a forcing set of  $M$ in $F$, which contradicts that $f(F)\geq 3$. Noting that each vertex of $F[S]$ is of degree 1 or 2, $F[S]$ is either union of a path with length 2 and two paths with length 1 or the union of a path with length 3 and a path with length one.
\end{proof}

By Theorem \ref{anti set},  $F-S$ must contain pendent edges.
Since $F-S$ has a unique perfect matching $M$, the pendent edges must belong to the perfect matching.


\begin{thm}\label{F' structure}
Let F be a fullerene with $af(F)=4$ and $E_0=\{e_{1},e_{2},e_{3},e_{4}\}$
 an anti-forcing set of $F$. Let $F'$ be the remaining graph after deleting both end vertices of all the pendent edges from $F-E_0$ recursively, and $F''$  the subgraph
induced by all deleted vertices. If $F'=\emptyset$, then $F''=F$; If $F'\neq \emptyset$,
then $F'$ consists of two disjoint pentagons and  one edge between them (see Fig. \ref{fzuizhong}) and there are exactly 8 edges from $F'$ to $F''$.
\end{thm}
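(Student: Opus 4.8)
The plan is to read off the shape of $F'$ from the perfect-matching structure it inherits, and then to collapse the abstract possibilities using the fullerene geometry. First I would record the elementary properties of $F'$. Every pendant edge of $F-E_0$ lies in its unique perfect matching $M$, and the peeling deletes both ends of such an edge; hence the set of peeled pendant edges is itself a perfect matching of the deleted vertex set, all of it contained in $M$. Consequently $M\cap E(F')$ is a perfect matching of $F'$, and it is \emph{unique}: a second perfect matching of $F'$ could be completed by the peeled pendant edges into a perfect matching of $F-E_0$ different from $M$. By construction $F'$ has no pendant vertex, and having a perfect matching it has no isolated vertex, so $\delta(F')\ge 2$; it is subcubic as a subgraph of the cubic graph $F$, and as an induced subgraph it has girth at least $5$ by Theorem \ref{clic is 5}. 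The first assertion is then immediate: if $F'=\emptyset$ all vertices have been deleted, so the subgraph induced by the deleted vertices is all of $F$, i.e. $F''=F$.

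Now assume $F'\neq\emptyset$ and fix a component $K$. Since a $2$-edge-connected graph with a perfect matching has at least two of them (the remark following Theorem \ref{unique matching}), $K$ is not $2$-edge-connected, so by Theorem \ref{unique matching} it has a bridge lying in $M$ and its block--cut tree has at least two leaf blocks. A leaf block $B$ cannot be a single edge (its far end-vertex would be pendant), so $B$ is $2$-connected with a unique cut vertex $c$. Matching $c$ inside $B$ would make $B$ a $2$-edge-connected graph carrying a perfect matching, forcing a second perfect matching of $K$; hence $c$ is matched outside $B$, the graph $B-c$ carries a unique perfect matching, and $|B|$ is odd. Thus every leaf block is an odd $2$-connected subgraph, and I would show it is a pentagon: the edges joining $B$ to the rest of $F$ form an edge-cut whose $B$-side induces an odd cycle; by the fact that in a fullerene every $5$-cycle bounds a face together with cyclic $5$-edge-connectivity (Theorem \ref{clic is 5} and Observation \ref{nontrivial cyclic 5-edge-cut cor}), a longer odd cycle cannot arise here, so $B$ is a pentagonal face and its five spokes form a cyclic $5$-edge-cut.

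Next I would pin down the global shape. A pentagon carrying two cut vertices would have to match both of them outside, leaving three vertices to be matched internally, which is impossible; hence every pentagon block is a leaf, and a parity/matching analysis of the block--cut tree reduces each component to two pentagons joined by a single bridge. To conclude that there is exactly one such component, so that $F'$ is precisely the graph of Fig. \ref{fzuizhong}, I would invoke $|E_0|=4$ together with the shape of $F[E_0]$ from Theorem \ref{anti set} and the course of the peeling, each surviving two-pentagon component forcing the anti-forcing set to destroy $2$-connectivity around it, which four edges can support only once. The edge count is then routine: the eight degree-$2$ vertices of $F'$ each send exactly one edge out of $F'$, and since (as the peeling shows) every end-vertex of $E_0$ is deleted into $F''$, none of these eight edges belongs to $E_0$, so all eight run from $F'$ to $F''$.

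The main obstacle is precisely the passage from the local datum $|E_0|=4$ to the global conclusion, because pure matching theory is not enough: it permits leaf cycles of length $7,9,\dots$, it permits the two pentagons to be joined by a longer path of bridges rather than a single edge, and it permits several disjoint two-pentagon components. Excluding all of these requires bringing in the planarity and face structure of the fullerene — the twelve-pentagon count, the girth and cyclic $5$-edge-connectivity of Theorem \ref{clic is 5}, and Observation \ref{nontrivial cyclic 5-edge-cut cor} — together with a careful bookkeeping of how the four anti-forcing edges interact with the pendant-edge peeling in each of the two cases of Theorem \ref{anti set}.
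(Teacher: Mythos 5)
Your setup is sound and matches the paper's first steps: the peeled pendant edges lie in $M$, so $M\cap E(F')$ is the unique perfect matching of $F'$, $F'$ has minimum degree $2$, and the $F'=\emptyset$ case is trivial. Your block--cut-tree analysis (leaf blocks are $2$-connected, their cut vertex is matched outside, so each leaf block is odd) is also correct as far as it goes. But the proof has a genuine gap at exactly the point where the theorem's content lies, and you concede as much in your last paragraph: nothing you prove forces a leaf block to be a pentagon rather than a $7$-cycle, a $9$-cycle, or a larger odd $2$-connected subgraph; nothing forces the two pentagons to be joined by a single edge rather than a longer bridge path; and nothing forces $F'$ to have only one component. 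Cyclic $5$-edge-connectivity and girth $5$ only give \emph{lower} bounds on cut sizes and cycle lengths, so they cannot by themselves rule out these configurations (a $7$-cycle leaf block would sit inside a cyclic $7$-edge-cut, which is perfectly consistent with $c\lambda(F)=5$). The sentence meant to close this gap --- ``each surviving two-pentagon component forcing the anti-forcing set to destroy $2$-connectivity around it, which four edges can support only once'' --- is not an argument.

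What is missing is the quantitative heart of the paper's proof: an \emph{upper} bound on $|X|$, the number of edges of $F$ joining $F'$ to $F''$. The paper proves by induction along the peeling order (its Claim 1) that $|X_i|\le 8$ at every stage, using the structure of $E_0$ from Theorem \ref{anti set} to show that each successive pendent vertex is incident either with an unused edge of $E_0$ and an edge of $X_i$, or with two edges of $X_i$, so the boundary size never grows past $8$; it also needs Lemma \ref{the conment edges in cyclic 5-edge-cuts 2} to guarantee the peeling does not stall. Combined with Claim 2 ($E_0\subseteq E(F'')\cup X$), this turns the lower bound $c\lambda(F)=5$ into an equality: writing $F'=G_1\cup G_2\cup\{e'\}$, the cuts $\{e'\}\cup X_{G_1}$ and $\{e'\}\cup X_{G_2}$ each have at least $5$ edges while $|X_{G_1}|+|X_{G_2}|\le 8$, forcing $|X|=8$ and both cuts to be cyclic $5$-edge-cuts; Observation \ref{nontrivial cyclic 5-edge-cut cor} then rules out the nontrivial case (else $G_i-v_i$ would be $2$-connected with two perfect matchings), so both blocks are pentagons, the bridge path has length one, and multiplicity of components is excluded because a second component would add at least $5$ more boundary edges. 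Incidentally, your closing remark that none of the eight edges of $X$ lies in $E_0$ is both unnecessary for the statement and unjustified: the paper only establishes $E_0\subseteq E(F'')\cup X$, which allows edges of $E_0$ to appear in $X$.
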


\begin{figure}[htbp]
\begin{center}
\includegraphics[totalheight=2 cm]{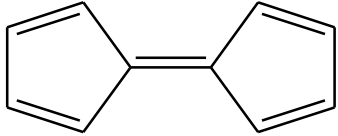}
\caption{\label{fzuizhong}\small{The structure of subgraph $F'$ with a unique Kekul\'e structure.}}
\end{center}
\end{figure}

\begin{proof}
Let $M$ be the unique perfect matching of $F-E_0$. By Theorem \ref{anti set}, there exist two adjacent edges in $E_0$, say $e_{1}$
and $e_{2}$. So there is one pendent edge $f_1=w_1z_1$ of  $F-E_0$  that is adjacent to  edges $e_{1}$ and $e_{2}$ in $E_0$.
Then $f_1$ is viewed as the first pendent edge, which must belong to the unique perfect matching $M$ of $F-E_0$. If $F-E_0-w_1-z_1$ has a pendent edge, then it is viewed as second pendent edge,  belonging to $M$. In general, let us define the following notations:

$X$: the set of edges of $F$ from $F'$ to $F''$,

$F'_{i}$: the remaining graph by deleting both end vertices of the former  $i$ pendent edges from $F-E_0$ (i.e. from the first to $i$-th pendent edges),

$F''_{i}$: the subgraph induced by all deleted vertices after deleting
both end vertices of the $i$-th pendent edge from $F-E_0$,

$X_{i}$: the set of edges in $F$ from $F'_{i}$ to $F''_{i}$.

If $F'\not=\emptyset$, then   from the above definition we have that $F'$ also has a unique prefect matching $M'=M|_{F'}$. By
Theorem \ref{unique matching}, there is a bridge $e'=v_{1}v_{2}$ of  $F'$ in $M'$. Then $F'$ is formed by  two disjoint subgraphs  $G_{1}$ and $G_{2}$ of $F'-e'$ connected by the edge $e'$
such that $v_{1}\in V(G_{1})$ and $v_{2}\in V(G_{2})$.
Since $F'$ has no pendent edges,  $G_{1}$ and $G_{2}$ both
contain cycles. The edges of $F$ from $F''$ to $G_{1}$ and $G_{2}$ are
denoted by $X_{G_1}$ and $X_{G_2}$, respectively (see
Fig. \ref{FFshiyitu}).
\begin{figure}[htbp]
\begin{center}
\includegraphics[totalheight=4 cm]{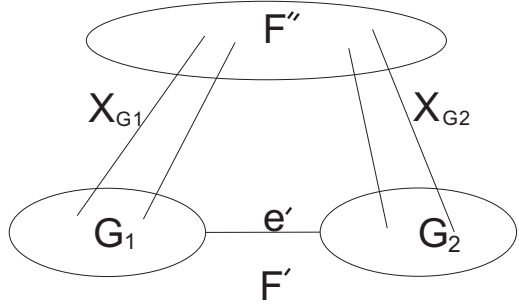}
\caption{\label{FFshiyitu}\small{The illustration for the proof of
Theorem \ref{F' structure}.}}
\end{center}
\end{figure}

{\em Claim 1}. $|X_i|\leq 8$ for each $i$.

Since  $F''_{1}$ is a complete graph with two vertices, we have that $|X_{1}|=4$.

Second pendent edge must exist, i.e. $F_1'$ has a pendent edge. If not, $F'=F_1'\not=\emptyset$. Since $\{e_1,e_2\}\subset X_1$, $[V(G_1),V(G_2)]$ has at most three edges in $F$, at most two of which belong to $E_0$. It follows that both  $X_{G_1}\cup [V(G_1),V(G_2)]$ and $X_{G_2}\cup [V(G_1),V(G_2)]$ would be  cyclic 5-edge-cuts of $F$, which have three edges in common,  contradicting Lemma \ref{the conment edges in cyclic 5-edge-cuts 2}.

Since there is no cycles of length 3 in fullerene,  the second
pendent edge must be adjacent to an edge in $E_0-\{e_{1},
e_{2}\}$, say $e_{3}$. By Theorem \ref{anti set}, the second pendent vertex is incident with $e_3$ and an edge in $X_1$ as well.
Then the $F''_{2}$ is a path of length 3, i.e. it consists of the first pendent edge and the second pendent edge and an edge connecting them.
So it is clear that $|X_{2}|=6$.

Similarly, $F'_2$ also has a pendent edge. If the third pendent edge is adjacent to $e_{4}$, then it is also adjacent to an edge in $X_2$, so $F''_3$ is connected. Hence $|X_3|\leq 3\times 6-2\times (6-1)=8$.  Further, if $F_i'$, $i\geq 3$, has a pendent edge, then the $(i+1)$-pendent edge  must be adjacent to two edges in $X_{i}$. So $|X_{i+1}|\leq |X_i|$. The induction procedure implies that we have $|X_i|\leq 8$ for all $i$.

\begin{figure}[htbp]
\begin{center}
\includegraphics[totalheight=3 cm]{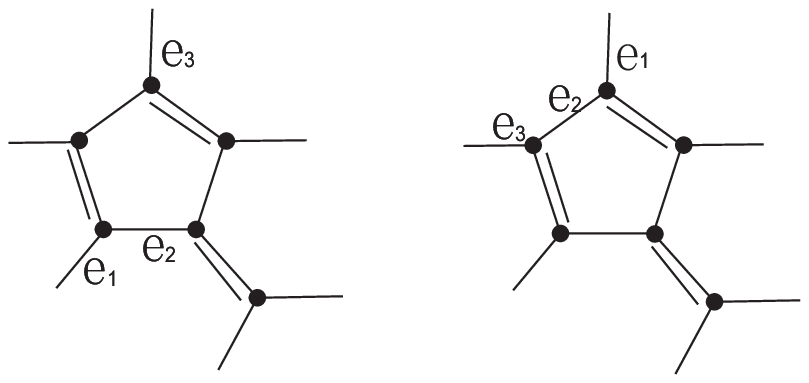}
\caption{\label{f3notcontaine4}\small{Two cases for the third
pendent edge not adjacent to $e_{4}$. }}
\end{center}
\end{figure}

If the third pendent edge is not adjacent with $e_{4}$, then it must be adjacent to two edges in $X_2$; both possible
cases of $F_3''$ are shown in Fig. \ref{f3notcontaine4}.
It is clear that  $|X_{3}|=6$. Similarly there exists a pendent edge of $F_3'$ by Lemma \ref{the conment edges in cyclic 5-edge-cuts 2}. From the structure of $F''_{3}$, we know
that the corresponding forth 1-degree vertex must be incident with $e_{4}$ and one edge of $X_3$. So $F_4''$ is connected and contains a cycle. Hence $|X_4|\leq 3\times 8-2\times 8=8$.    Similar to the above situation we can obtain  $|X_{i}|\leq 8$ for any $i$.
 Thus we have $|X|\leq 8$.

 By the above proof, we also obtain the following claim.


{\em Claim 2}.  $E_0\subseteq E(F'')\cup X$.

%
%


Now suppose that $F'\not=\emptyset$. We  will prove that $|X|=8$ and  $F'$ is formed by two
disjoint pentagons connected by exactly one edge.

By  Claims 1 and  2, we have that $|X|\leq 8$, $[V(G_1),V(G_2)]=\{e'\}$ and $\{e'\}\cup X_{G_1}$ and $\{e'\}\cup X_{G_2}$ are two cyclic edge-cuts of $F$.
Since $c\lambda(F)=5$,  $|X_{G_1}|+|X_{G_2}|=|X|\leq 8$. So $|X|=8$ and $\{e'\}\cup X_{G_1}$ and $\{e'\}\cup X_{G_2}$ are two cyclic 5-edge-cuts.

If $\{e'\}\cup X_{G_i}$ is nontrivial, $i=1$ or 2,  then by
Observation \ref{nontrivial cyclic 5-edge-cut cor}, $G_{i}-v_{i}$ is 2-connected and has
at least two perfect matchings, which  contradicts that $F'$ has a unique perfect matching. So
$\{e'\}\cup X_{G_1}$ and $\{e'\}\cup X_{G_2}$ are both trivial cyclic
5-edge-cuts. Hence both $G_1$ and $G_2$ are pentagons, and the required results follow.
\end{proof}

\section{Construction for all fullerenes  with   anti-forcing number 4}

In this section we  present a construction for generating all fullerene graphs  with anti-forcing number four.

Suppose that $F$ is a fullerene graph with an antiforcing set  $E_0=\{e_{1},e_{2},e_{3},e_{4}\}$.  By Theorem \ref{F' structure} with  notations in its proof,  we have that each $F_i'$, $1\leq i \leq 6$, has a pendent edge since $F$ has at least 20 vertices.  If the former three consecutive pendent edges  are all adjacent to edges of $E_0$, we must describe all possible structures of $F_3''$ induced by all endvertices of these edges. It is known that $F_2''$ is a path of length 3, and the third pendent vertex is incident with edge $e_4$ and has a neighbor in $F_2''$. So we can check that all cases ({\em configurations}) of $F_3''$ attached possible additional edges are showed in Fig. \ref{fdaofk} in a sense that in the same configuration edge set $\{e_{1},e_{2},e_{3},e_{4}\}$ can be chosen in a  different way.

\begin{figure}[htbp]
\begin{center}
\includegraphics[totalheight=6.6cm]{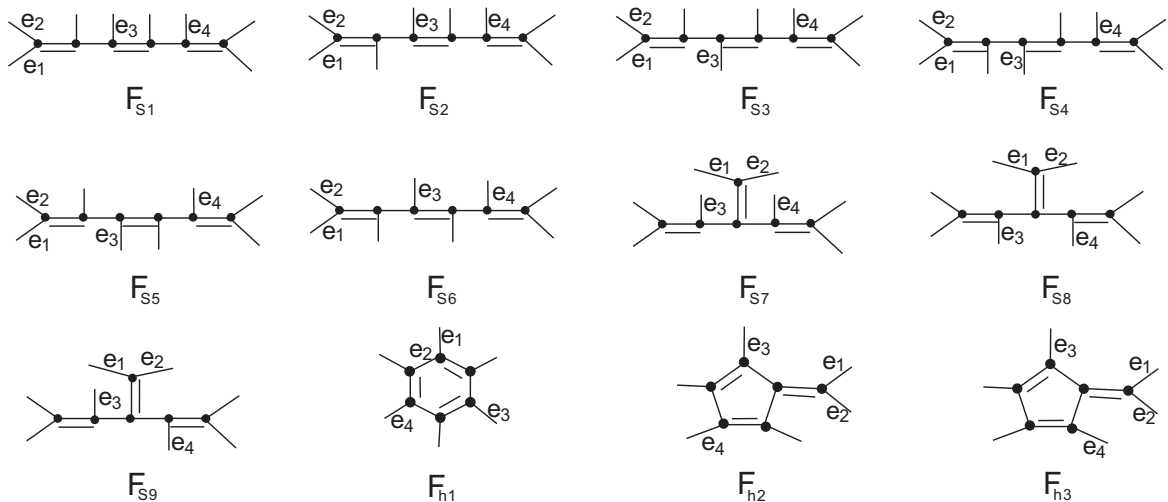}
\caption{\label{fdaofk}\small{All configurations of $F_3''$ that the  third pendent edge
is adjacent to  an edge in $E_0$.}}
\end{center}
\end{figure}

If the third pendent edge is  adjacent  to no edges of $E_0$, then it has two neighbors in $F''_2$ and the  fourth pendent edge must be adjacent to $e_4$. From two possible structures of $F_3''$ in Fig. \ref{f3notcontaine4} we can obtain  all cases (configurations) of $F_4''$ attached possible additional edges as shown in Fig. \ref{fdaofk2}.

\begin{figure}[htbp]
\begin{center}
\includegraphics[totalheight=6.6cm]{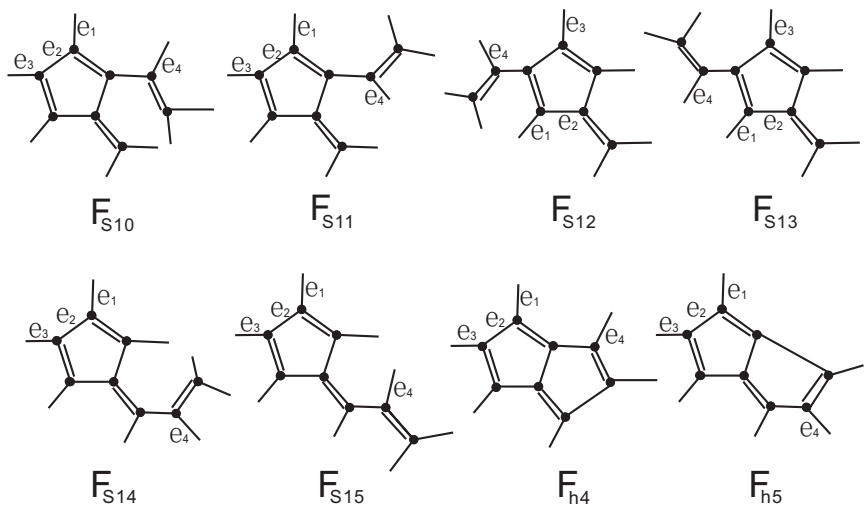}
\caption{\label{fdaofk2}\small{All configurations of $F_4''$ that the  third pendent edge
is adjacent to  no edge in $E_0$}}
\end{center}
\end{figure}

A {\em generalized patch} (of fullerene) is  a connected plane graph where all faces are
hexagons and pentagons except one outer face, with vertices not on the outer face having
degree 3 and vertices on the outer face having degree 1, 2 or 3.

\begin{Lemma}\label{induction}Every  $F_i''$ is a generalized patch of fullerene. Further, if $F_i'$ has a pendent vertex, then at least one pendent vertex is incident with two edges in $X_i$ that are consecutive along the boundary of $F_i''$.
\end{Lemma}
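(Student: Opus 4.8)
\textbf{Proof proposal for Lemma \ref{induction}.}

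The plan is to prove both assertions by induction on $i$, using the recursive construction of the $F_i''$ as the spine of the argument. For the first claim, that each $F_i''$ is a generalized patch, the base case is immediate: $F_1''$ is a single edge (two vertices joined by the first pendent edge), which is a connected plane graph with no internal faces, and since both its vertices lie on the outer face and have degree $1$ or $2$ in $F_1''$, it trivially satisfies the definition. For the inductive step, I would assume $F_i''$ is a generalized patch and examine how $F_{i+1}''$ is obtained from it. The $(i+1)$-th pendent edge $f_{i+1}=w_{i+1}z_{i+1}$ of $F_i'$ is added along with the vertex $w_{i+1}$ (the degree-$1$ endpoint) and possibly some edges joining $w_{i+1}$ to vertices already in $F_i''$. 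Because $F$ is cubic, $w_{i+1}$ has degree $3$ in $F$; one of its incident edges is the pendent edge $f_{i+1}$ itself, and by Theorem \ref{anti set} together with the analysis of the configurations (Figs.\ \ref{fdaofk}, \ref{fdaofk2}), its other two incident edges go either to $E_0$ or into $F_i''$. The key geometric point is that whenever these edges close up a face, that face is bounded by a cycle of $F$, hence a pentagon or hexagon by Theorem \ref{clic is 5} (recall every $5$- or $6$-cycle of a fullerene bounds a face); no shorter cycle can arise since the girth is $5$. Thus attaching the new vertex and edges keeps all internal faces pentagonal or hexagonal and preserves the degree conditions, so $F_{i+1}''$ is again a generalized patch.

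For the second assertion I would argue as follows. Suppose $F_i'$ has a pendent vertex; then by definition the construction continues, so there is an $(i+1)$-th pendent edge. By Theorem \ref{anti set} and the case analysis already carried out in the proof of Theorem \ref{F' structure}, once the edges of $E_0$ have been exhausted (which happens by the time the process reaches the configurations of $F_3''$ or $F_4''$), every subsequent pendent vertex of $F_j'$ must be incident with exactly two edges of $X_j$, and these two edges are the edges of $F$ joining the new vertex to the current boundary of $F_j''$. Because the new vertex is cubic and its third edge is the pendent edge pointing into $F_j'$, these two $X_j$-edges attach it to two vertices lying on the boundary cycle of $F_j''$; as they bound a common newly-created face (pentagon or hexagon), they are necessarily consecutive along that boundary. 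I would formalize ``consecutive along the boundary'' by noting that the two attachment edges, together with a boundary path of $F_i''$ and the new vertex, enclose a single face, so no boundary vertex of $F_i''$ lies strictly between the two attachment points on the relevant side.

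The main obstacle I anticipate is handling the early stages ($i\le 3$ or $i\le 4$) where edges of $E_0$ are still being consumed, since there the newly added pendent vertex may be incident with an edge of $E_0$ rather than two edges of $X_i$, so the ``two consecutive $X_i$-edges'' conclusion need not hold at every intermediate step but only once $E_0$ is used up. To manage this cleanly I would split the induction into a finite initial phase, governed directly by the explicit configurations in Figs.\ \ref{fdaofk} and \ref{fdaofk2}, followed by a uniform inductive phase in which no edge of $E_0$ remains and each new pendent vertex contributes exactly two consecutive boundary edges to $X_i$. The second, subtler difficulty is verifying that the attaching edges are genuinely \emph{consecutive} rather than merely incident to two boundary vertices: here I would lean on planarity and Theorem \ref{clic is 5}, arguing that if they were non-consecutive they would, together with the enclosed boundary arc, create a face that is neither a pentagon nor a hexagon, or would produce a separating cycle shorter than the cyclic edge-connectivity permits, contradicting that $F$ is a fullerene.
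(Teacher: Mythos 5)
Your overall strategy---induction on $i$ with a planarity/face argument for consecutiveness---matches the paper's argument for the case it calls ``$F_i'$ connected'', but you have a genuine gap: you never consider the possibility that $F_i'$ is \emph{disconnected}, and this is exactly where your face argument breaks down. Your justification that the two attachment edges at a pendent vertex $x$ are consecutive rests on the claim that these edges, the vertex $x$, and a boundary arc of $F_i''$ enclose a single face of $F$. That claim requires knowing that nothing of $F$ lies inside the enclosed region. When $F_i'$ is connected this holds because $F_i'-x$ is connected and hangs off the third neighbor of $x$, so all of it sits in the other region of the plane; this is precisely the paper's argument. But if $F_i'$ has several components, another component can lie inside the enclosed region; the region is then not a face of $F$, and the two $X_i$-edges at $x$ need not be consecutive along the boundary. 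The paper handles this case separately: using $|X_i|\le 8$, $c\lambda(F)=5$ and Lemmas \ref{3-edge-cut} and \ref{4-edge-cut}, it shows a disconnected $F_i'$ has exactly two components, each a single edge, and then reads off both conclusions directly. Without this case your induction is incomplete.

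A secondary flaw is the deduction ``that face is bounded by a cycle of $F$, hence a pentagon or hexagon by Theorem \ref{clic is 5}'', which runs backwards. A cycle of $F$ need not bound a face, and being bounded by a cycle of $F$ does not make a region a pentagon or hexagon; only the converse holds (every $5$- or $6$-cycle bounds a face). The valid implication is: once you know the enclosed region contains no vertex or edge of $F$ (the point discussed above, together with the fact that $F_i''$ is an induced subgraph), the region is a face of $F$, and faces of a fullerene are pentagons or hexagons. Likewise, your fallback appeal to cyclic edge-connectivity (``non-consecutive attachment would produce a separating cycle shorter than permitted'') is not substantiated---non-consecutive attachment by itself only produces a region that fails to be a face; no forbidden short cycle or small cyclic edge-cut arises automatically from it.
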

\begin{proof}From the above discussions we have that all  $F_i''$ are generalized patches from $i=1$ to 3 and 4  respectively in above two cases.  Suppose that $F_i''$ is a generalized patch and  $F_i'$ has a pendent vertex. Then $F_i'$ lies in the outer face of $F_i''$.  Let $x$ be any pendent vertex of  $F_i'$ and let $y_1$, $y_2$ and $y_3$ be three neighbors of $x$ such that $y_1,y_2\in F_i''$ and $y_3\in F_i'$. If $F_i'$ is connected, then edges $xy_1$ and $xy_2$ and a part of the boundary of $F_i''$ bound a region that is a face of $F$. This implies that $xy_1$ and $xy_2$ are consecutive along the boundary of $F_i''$. Further if $y_3$ has one neighbor $y_4$ in $F_i''$, then similarly we have that $y_3y_4$, $xy_1$ and $xy_2$ are consecutive in $X_i$ along the boundary of $F_i''$. If $y_3$ has two  neighbors  in $F_i''$, then $F_{i+1}''=F$.  Therefore $F_{i+1}''$ is a generalized patch. If $F_i'$ is disconnected, then it has exactly two components that each contains a unique perfect matching by $|X_i|\leq 8$. Since $c\lambda(F)=5$,  by Lemmas \ref{3-edge-cut} and \ref{4-edge-cut} both components of  $F'_i$ are complete graphs with two vertices. So we can see that at least one pendent vertex $F'_i$ is incident with two edges in $X_i$ that are consecutive along the boundary of $F_i''$ and $F_{i+1}''$ is a generalize patch. This induction establishes the lemma. 
\end{proof}

Since configurations $F_{hj}$, $1\leq j\leq 5$, in Figs. \ref{fdaofk} and \ref{fdaofk2} can be generated from some of configurations $F_{si}$, $1\leq i\leq 15$, by merging two half edges into one edge. So let us call the configurations $F_{s1} F_{s2},\ldots,F_{s15}$  the \emph{initial seed graphs}. Our construction idea is that starting from such initial seed graphs to expand incrementally a seed graph $S$ to a large {\em  seed graph} $S'$  by using several simple operations on the boundary of the seed graphs to realize the growing of generalized patches $F_i''$ (Lemma \ref{induction}).

Similarly to the case of using the boundary code to describe a fullerene patch, we use a sequence to describe the boundary of a seed graph $S$. We label clockwise (counterclockwise) the half-edges of $S$ by $t_1,t_2,\ldots,t_k$, and set $a_i$ as the number of vertices from $t_i$ to $t_{i+1}$ in a clockwise (counterclockwise) scan of the boundary of $S$. We call then the cyclic sequence $[a_1,a_2,\ldots,a_k]$ a \emph{distance-array} of $S$. For instance, a distance-array to describe the boundary of $F_{S1}$ shown in Figure $8$ could be $[12222216]$. Since a fullerene graph has only pentagonal and hexagonal faces, we have that $1\le a_i\le 6$.  Note that a boundary may have more than one distance-arrays describing it since we might start reading the boundary from different position and we could read the boundary in clock or counter clock direction. However, it is easy to see that for the same boundary, the boundary arrays are rotations and reversions of each other and we consider them equivalent in this paper. Thus we shall make no distinction between the boundary of a seed graph and a distance-array describing it.

We define the following operations on a seed graph $S$:

\begin{itemize}
\item[$(O_1)$]  If the length of a distance-array of $S$ is at least $4$, $a_i$ is $4$ or $5$ and $a_{i-1}$ and $a_{i+1}$ are both at most $4$,  let $t_i$ and $t_{i+1}$ be incident to a new vertex $u$. Add another new vertex $v$, an edge $uv$, and attach two half edges to $v$. A distance array for the resulting seed graph $S'$ would be $[a_1,\ldots,a_{i-2},a_{i-1}+2,1,a_{i+1}+2,a_{i+2},\ldots a_k]$.

\item[$(O_2)$] If the length of a distance-array of $S$ is at least $4$, $a_i$ is $5$ or $6$ and $a_{i-1}+a_{i+1}$ is at most $6$, let the half edges $t_i$ and $t_{i+1}$ merge into one edge. A distance array for the resulting seed graph $S'$ would be $[a_1,\ldots,a_{i-2},a_{i-1}+a_{i+1}, a_{i+2},\ldots, a_k]$.

%

\item[$(O_3)$] If  the length of a distance-array of $S$ is $2$ and $a_1$ and $a_2$ are both $5$ or $6$, we merge the half edges $t_1$ and $t_2$ into one edge. The distance-array corresponding to the resulting graph $S'$ is the empty distance-array $[]$.

\item[$(O_4)$] If the length of a distance-array of $S$ is $8$, $a_i$ and $a_{i+4}$ are both $1$ or $2$ for some $i$, and all other $a_j$ are $3$ or $4$, we connect each half-edge of $S$ to a vertex of degree $2$ in the graph shown in Figure $4$, in the only admissible way. The distance-array corresponding to the resulting graph $S'$ is the empty distance-array $[]$.
\end{itemize}

For  example, see Fig.  \ref{12}. It is not difficult to see that the operations above maintain the desired property. With the exception of $O_4$, they are primitive operations in the sense that they add the minimum number of vertices, edges or half-edges necessary to generate larger seed graphs.

\begin{figure}[htbp]
\begin{center}
\includegraphics[totalheight=5.5 cm]{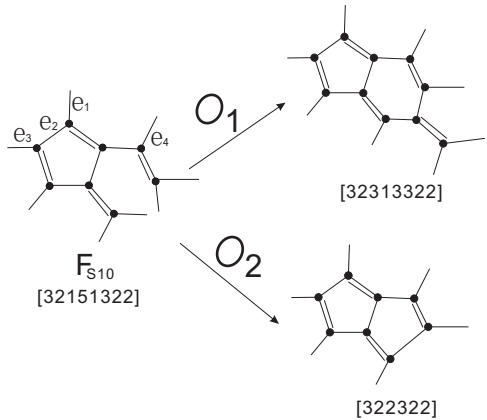}
\caption{\label{12}\small{An example for  two operations $O_1$ and $O_2$ on
 $F_{s10}$  and on the
 distance-array.}}
\end{center}
\end{figure}

Operation $O_1$ adds two vertices to a seed graph and preserves the length of the distance array. Whereas, Operations $O_2$ and $O_3$ preserve the number of vertices in the seed graph but reduce by $2$ the length of the distance-array. Operation $O_4$  adds  $10$  vertices to the seed graph. Clearly,  the application of Operations $O_3$ and $O_4$ produces a fullerene graph, and no further operation can be performed so they are \emph{terminating} operations.

The next stage is to compute a directed graph $D$ representing all possible ways to obtain fullerene graphs  from the initial seed graphs by the successive implementations (in any order and a finite number of times) of the four operations $O_1$ to $O_4$. We call this directed graph $D$ the \emph{distance-array digraph}. The vertices in $D$ will be the distance-arrays of all conceivable seed graphs, and
there will be an arc $(s, s')$ in $D$ if and only if there is an operation on a seed graph $S$ with boundary $s$ yielding a seed graph $S'$ with boundary $s'$.

Let $L_{i}$ denote the distance-array determined by  $F_{Si}$, $1\leq
i\leq15$. Then we have

 $L_{1}=[12222216]$, $L_{2}=[13222152]$, $L_{3}=[12322143]$,
$L_{4}=[14221422]$,

$L_{5}=[12421323]$, $L_{6}=[13321332]$, $L_{7}=[13215123]$,
$L_{8}=[14123214]$,

 $L_{9}=[14124123]$, $L_{10}=[32151322]$, $L_{11}=[41241322]$, $L_{12}=[41232313]$,

$L_{13}=[32142313]$, $L_{14}=[25122322]$, $L_{15}=[24213322]$.

From such initial distance arrays we describe the following procedure to generate $D$.\\

\begin{Alg}[Generation of Distance-Array Digraph $D$]\hfill

\begin{itemize}
\item[$(S1)$] Set $V=\{L_1,L_2,\ldots,L_{15}\}$ and $A=\emptyset$.
\item[$(S2)$] Select a distance array $s\in V$ on which no operations have been made.

\item[$(S3)$]For a suitable operation ($O_1$ to $O_4$) on $s$, make it  to get a distance array $s'$, then  set $V:=V\cup \{s'\}$ whenever  $s'\not\in V$, and set $A:=A\cup \{(s,s')\}$. Repeat this procedure for each of such operations. If no suitable operations on $s$, we say we have made operations on $s$.
\item[$(S4)$] If all distance arrays in $V$ have been selected to make operations, then go to $(S5)$. Otherwise, go to (S2).
\item[$(S5)$] For every vertex $s\in V$, delete $s$ if there is no directed  path from $s$ to the empty distance-array $[]$ (at this point we know $[]\in V$). Then obtain the final directed graph $D$ with vertex-set $V$ and arc-set $A$.
\end{itemize}
\end{Alg}
\begin{figure}[htbp]
\begin{center}
\includegraphics[totalheight=6.2 cm]{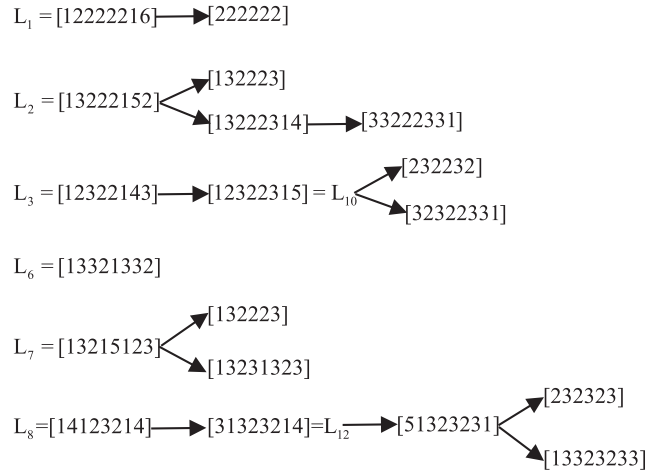}
\caption{\label{fig10}\small{Generation of all possible distance arrays from $L_1$, $L_2$, $L_3$, $L_6$, $L_7$, $L_8$, $L_{10}$ and $L_{12}$. }}
\end{center}
\end{figure}

We now use the above procedure to generate $D$. From the initial distance arrays $L_1$, $L_2$, $L_3$, $L_6$,$L_7$, $L_8$, $L_{10}$ and $L_{12}$, we implement step (S3) repeatedly for all possible operations (in fact $O_1$ and $O_2$) to produce a digraph on a series of distance arrays, see Fig. \ref{fig10}. We can see that they only reach ``dead" distance arrays (i.e. non-empty distance arrays for which no further operations can be made). So all such distance arrays are discarded and does not appear in the final $D$.

For the remaining initial distance arrays,  from them we also implement (S3) repeatedly by  all possible operations  to produce a digraph on a series of distance arrays, and  discard all distance arrays that cannot reach the empty distance array. We can check that the resulting directed graph  $D$ is as shown in Fig. \ref{final-D} . It can be also verified by a program.

\begin{figure}[htbp]
\begin{center}
\includegraphics[totalheight=13.5 cm]{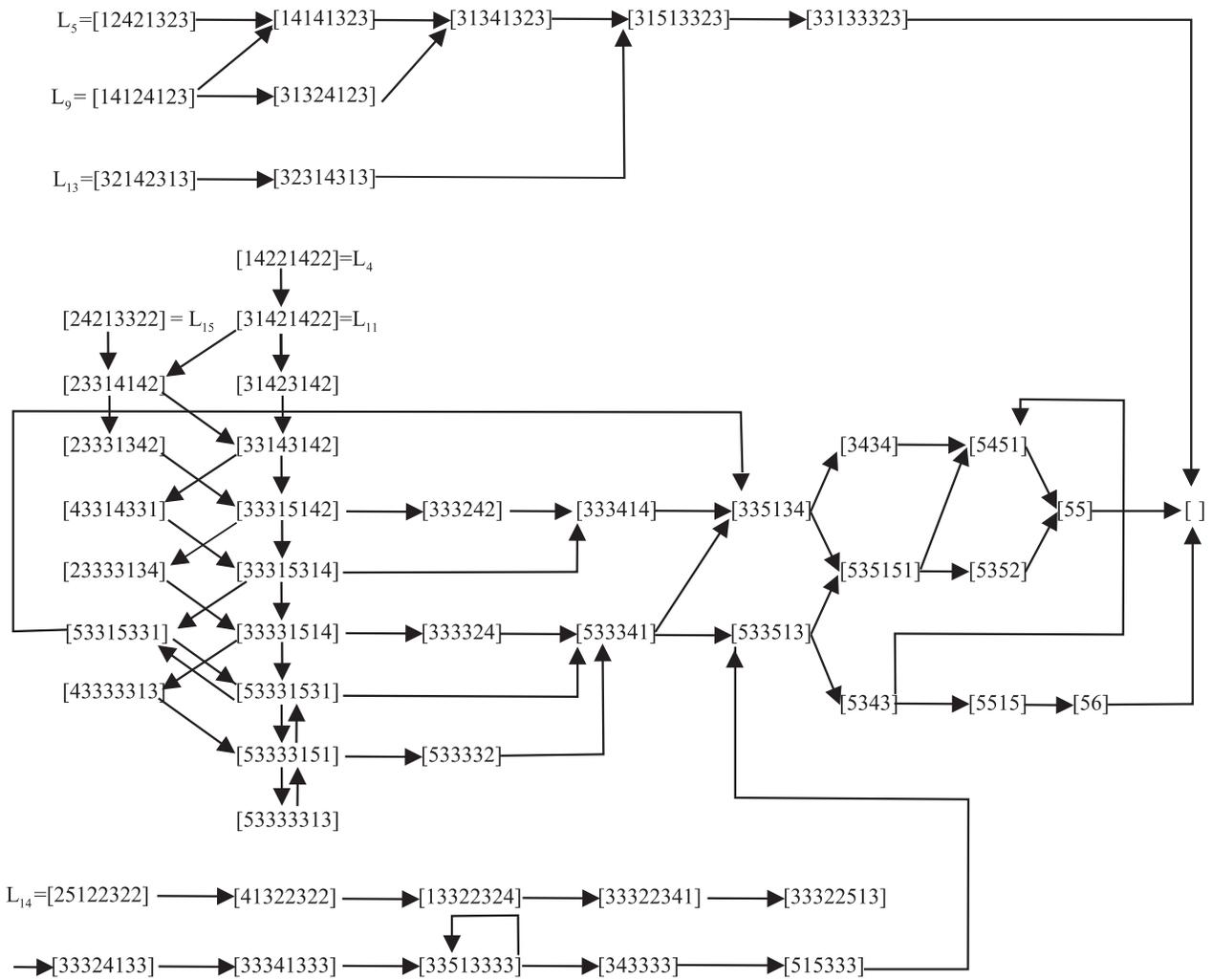}
\caption{\label{final-D}\small{Distance-Array Digraph $D$. }}
\end{center}
\end{figure}

After the above computations, the distance-arrays digraph $D$ turned out to have $52$ vertices and  $72$ arcs. We found that $D$ contains exactly one loop, at vertex [3351333], and three pair of symmetric arcs, which form some possible closed directed walks. 

Given a sequence of distance arrays $A_1,A_2,\ldots,A_{t+1}$ such that an operation $O^i$ on $A_i$ is made to get $A_{i+1}$ for each $1\leq i\leq t$, where $O^i\in\{O_1,O_2,O_3,O_4\}$. If $A_1$ is the distance array of a seed graph $S_1$, then we accordingly obtain the sequence of seed graphs $S_1,S_2,\ldots,S_{t+1}$ by implementing the same series of operations from $S_1$. That is, for each $1\leq i\leq t$ $S_{i+1}$ has the distance array $A_{i+1}$ and $S_{i+1}$ is obtained from $S_i$ via operation $O^i$. We may say {\em $S_{t+1}$ is generated from $S_1$ by making a series of operations along  sequence  $A_1,A_2,\ldots,A_{t+1}$.}

For example, take any directed path in $D$ from $L_5$, $L_9$ or $L_{13}$ to [33133323]. From the corresponding initial seed graph we generate a seed graph  by making a series of operations along  this directed path, then make the terminating operation $O_4$ on it to always get fullerene graph $F_{24}$.

Take another directed path in $D$: $L_{15}=[24213322]\rightarrow[23314142]\rightarrow[33143142]\rightarrow[33315142]\rightarrow[333242]\rightarrow[333414]\rightarrow[335134]\rightarrow[3434]\rightarrow[5451]\rightarrow[55]\rightarrow[]$.
Since the initial seed graph $F_{s15}$ has eight vertices,  from $F_{s15}$ we generate fullerene graph $F_{20}$ by making a series of operations along this directed path.

We now describe  our method generating all fullerene graphs with anti-forcing number 4 as follows.
\begin{thm}\label{generate}
A fullerene graph has the anti-forcing number 4 if and only if it can be generated from one of six initial seed graphs $F_{s4}, F_{s5}, F_{s9}, F_{s13}, F_{s14}$, $F_{s15}$ by making a series of operations along a directed walk in $D$ from its distance array to the empty distance array.
\end{thm}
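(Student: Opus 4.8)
The plan is to prove both directions of the equivalence, using the structural results of Theorems~\ref{F' structure} and the characterization of the configurations $F_3''$ and $F_4''$ established in Figs.~\ref{fdaofk} and \ref{fdaofk2}, together with Lemma~\ref{induction}. The underlying philosophy is that the recursive peeling of pendent edges from $F-E_0$ (described in the proof of Theorem~\ref{F' structure}) is exactly reversed by the growth operations $O_1$ through $O_4$ acting on the distance-array of the generalized patches $F_i''$. The whole argument therefore reduces to verifying that the peeling process and the operations are mutually inverse, and that the digraph $D$ faithfully records every admissible growth sequence.

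First I would prove the ``if'' direction, which is the more routine half. Suppose $F$ is generated from one of $F_{s4}, F_{s5}, F_{s9}, F_{s13}, F_{s14}, F_{s15}$ along a directed walk in $D$ ending at $[\,]$. By construction each initial seed graph carries a distinguished edge set $E_0=\{e_1,e_2,e_3,e_4\}$ (the half-edges and the configuration structure fix these), and I must check that this $E_0$ is genuinely an anti-forcing set of the resulting fullerene $F$. The key observation is that each operation $O_1$--$O_4$ attaches new vertices in a way that forces a unique extension of the partial matching: $O_1$ appends a pendent edge $uv$ whose matching status is determined, while $O_2,O_3,O_4$ close off the boundary without introducing matching ambiguity. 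Thus by induction along the walk, $F-E_0$ has a unique perfect matching, so $af(F)\le 4$; combined with the lower bound $af(F)\ge 4$ from Theorem~\ref{lower bond }, we get $af(F)=4$.

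Next I would establish the ``only if'' direction. Assume $af(F)=4$ with anti-forcing set $E_0$. By Theorem~\ref{F' structure} and the analysis of $F_3''$ (resp.\ $F_4''$), the peeling of pendent edges starts from one of the configurations $F_{s1},\dots,F_{s15}$, and by Lemma~\ref{induction} every intermediate $F_i''$ is a generalized patch whose boundary admits a suitable operation reversing the removal of the next pendent edge (or the final closure when $F_i'=\emptyset$ or $F_i'$ is the two-pentagon graph of Fig.~\ref{fzuizhong}). Reading this sequence of generalized patches forward yields a series of operations $O_1$--$O_4$, hence a directed walk in the distance-array digraph $D$ from the initial seed to $[\,]$. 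The remaining point is to discard the nine seeds that cannot appear: the seeds $F_{s1},F_{s2},F_{s3},F_{s6},F_{s7},F_{s8},F_{s10},F_{s11},F_{s12}$ correspond to distance-arrays from which, as shown in Fig.~\ref{fig10} and step $(S5)$ of the algorithm, no directed path reaches $[\,]$, so they are deleted from $D$ and only the six listed seeds survive.

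The hard part will be verifying that the four operations, as abstractly defined on distance-arrays, are in bijective correspondence with the genuinely realizable one-step growths of generalized patches of a fullerene---that is, that the distance-array bookkeeping in $O_1$--$O_4$ is both \emph{sound} (every arc of $D$ comes from a real planar patch extension producing only pentagons and hexagons) and \emph{complete} (every legal patch extension compatible with Lemma~\ref{induction} is captured by some $O_i$, with the side conditions on the $a_i$ being exactly the face-size and degree constraints $1\le a_i\le 6$). Establishing completeness requires a careful case check that the only ways a pendent vertex can attach to the boundary---absorbing one or two consecutive boundary edges, or triggering the terminal closures---are precisely those encoded by the operations, and that the index arithmetic on the arrays correctly tracks the changing boundary. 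Once this correspondence is pinned down, the theorem follows by matching walks in $D$ with peeling/growth sequences, and the finiteness of $D$ (the $52$ vertices and $72$ arcs) guarantees the characterization is effective.
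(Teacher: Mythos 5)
Your proposal is correct and follows essentially the same route as the paper: the ``if'' direction by induction along the directed walk with the same case analysis on the operations ($O_1$ adds a vertex incident to a forced pendent edge, $O_2$ and $O_3$ add no vertices, $O_4$ attaches the unique-Kekul\'e two-pentagon graph of Theorem~\ref{F' structure}), combined with the lower bound of Theorem~\ref{lower bond }; and the ``only if'' direction by reversing the pendent-edge peeling via Theorem~\ref{F' structure}, the configurations of Figs.~\ref{fdaofk} and \ref{fdaofk2}, and Lemma~\ref{induction}, discarding the seeds with no directed path to $[\,]$. The ``hard part'' you flag (soundness and completeness of the distance-array bookkeeping) is exactly what the paper's Lemma~\ref{induction} and its surrounding discussion are meant to carry, and the paper is no more detailed on that point than your plan.
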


\begin{proof}If a fullerene graph $F$ has the anti-forcing number 4, then by the above discussions  $F$ can reconstructed from some initial seed graph $F_{si}$ by making a series of operations in $O_1$ to $O_4$, which also correspond to a directed walk in $D$ from  the distance array of $F_{si}$ to the empty distance array. 

Conversely, let $F$ be a fullerene graph generated from some initial seed graphs $F_{si}$ by making a series of operations along a directed walk from its distance array to the empty distance array. Let $E_0:=\{e_1,e_2,e_3,e_4\}$ relating with $F_{si}$. It suffices to show that $F-E_0$ has a unique perfect matching. We can check that the vertices of $F_{si}$ have a unique pairing in $F-E_0$ (see double edges in Figs. \ref{fdaofk} and \ref{fdaofk2}). That is, any perfect matching of $F-E_0$ (in fact it  exists) has the restriction on $F_{si}$, which is its unique perfect matching. For any middle seed graph $S$ generated from $F_{si}$ along a directed walk, suppose that any perfect matching of $F-E_0$ has the restriction on $S-E_0$ that is its unique perfect matching. If operation $O_2$ or $O_3$ is made to get a seed graph $S'$, then since $V(S)=V(S')$ the same statement holds also for $S'-E_0$. If operation $O_1$  is made to get a seed graph $S'$, then one added vertex has two neighbors in $S$. Hence it is incident with a pendant edge in $F-S$, which must belong to a perfect matching of $F-E_0$, and the same statement still holds for $S'-E_0$. If operation $O_4$  is made to get a seed graph $S'$, then by Theorem \ref{F' structure} $F-S$ is a  subgraph formed by two disjoint pentagons connected by an edge, and $F-S$ has a unique perfect matching.  In this situation $F=S'$ and  $F-E_0$ has a unique perfect matching. This induction establishes the required fact.
\end{proof}


\begin{thm}
For any even $n\ge20$ ($n\neq 22,26$) there is a fullerene $F_n$ such that $af(F_n)=4$.
\end{thm}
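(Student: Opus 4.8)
The plan is to reduce the existence statement to a counting argument on the digraph $D$. Since $af(F)\ge 4$ for every fullerene $F$ by Theorem \ref{lower bond }, it suffices, for each admissible $n$, to exhibit one fullerene on $n$ vertices carrying an anti-forcing set of size $4$; by Theorem \ref{generate} this is exactly the task of producing a directed walk in $D$ from one of the six initial seed graphs to the empty distance-array $[]$ whose resulting fullerene has order $n$. The essential tool is to bookkeep how each operation changes the order: $O_1$ adds $2$ vertices, $O_2$ and $O_3$ add none, and $O_4$ adds $10$ (and either $O_3$ or $O_4$ is always the terminating step). Hence the order of the generated fullerene equals $c_0+2\cdot\#(O_1)+10\cdot\#(O_4)$, where $c_0$ is the order of the chosen initial seed and $\#(O_4)\in\{0,1\}$.

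First I would dispose of the two smallest cases using the explicit walks already recorded above: the walk from $F_{s15}$ (of order $8$) realizes $F_{20}$ with six applications of $O_1$ and a terminating $O_3$, and the walk from $F_{s5}$ (equally $F_{s9}$ or $F_{s13}$) to $[33133323]$ followed by the terminating $O_4$ realizes $F_{24}$.

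The heart of the argument is the \emph{unique loop} of $D$, at the vertex $[3351333]$. Because $O_2$ and $O_3$ reduce the length of a distance-array while $O_3$ and $O_4$ terminate at $[]$, the only operation that can send a distance-array to a rotation or reversion of itself is $O_1$; thus the loop is a single $O_1$ and adds exactly $2$ vertices. Since $[3351333]$ survives in the final $D$, it lies on a directed walk $W_0$ from one of the six initial seed graphs to $[]$; let $N$ be the order of the fullerene it produces. For each $k\ge 0$, inserting $k$ traversals of the loop at $[3351333]$ into $W_0$ yields a valid walk to $[]$ whose fullerene has order $N+2k$. I would verify that the base order is $N=28$, so that this family realizes precisely all even $n\ge 28$. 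Together with $n=20$ and $n=24$, this covers every even $n\ge 20$ other than $22$ and $26$, which is the assertion; these two values are genuinely absent, since no fullerene has order $22$ and every fullerene of order $26$ has anti-forcing number $5$ (shown separately).

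The main obstacle is the exact value of the base order $N$: I must confirm that some walk through $[3351333]$ bottoms out at $28$ rather than at a larger even number, for otherwise a handful of small even orders (such as $28$ itself) would require separate explicit walks. Pinning $N$ down, and checking that the loop can always be freely inserted while preserving a valid walk to $[]$, is the one place where one cannot avoid examining the concrete seed graph sitting over $[3351333]$ and tracing its boundary through the loop; the remaining bookkeeping is routine and, as already noted for $D$ itself, can be confirmed by computer.
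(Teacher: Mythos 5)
Your proposal is correct and takes essentially the same route as the paper: explicit constructions for $n=20$ and $n=24$, plus repeated insertion of the unique $O_1$-loop at $[3351333]$ (written $[33513333]$ in the paper's walk) to realize every even order from $28$ onward. The single step you leave open---verifying that some walk through the loop vertex has base order $N=28$---is precisely what the paper supplies via the explicit walk from $L_{14}=[25122322]$ through $[33513333]$ to the empty distance-array, along which $O_1$ occurs $k-4$ times, so that traversing the loop $k-14$ times produces $F_{2k}$ for every $k\ge 14$.
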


\begin{proof}

Fig. \ref{Example} already covers the cases $n=20$ and $n=24$, so we assume $n=2k$ with $k\ge 14$.

Consider a directed walk in $D$:  $L_{14}=[25122322]\rightarrow[41322322]\rightarrow[13322324]\rightarrow[33322341]\rightarrow[33322513]\rightarrow[33324133]
\rightarrow[33341333]\rightarrow[33513333]^{k-13}\rightarrow[343333]\rightarrow[515333]\rightarrow[533513]\rightarrow[5343]
\rightarrow[5451]\rightarrow[55]\rightarrow[]$, where $[33513333]^{k-13}$ means $k-13$ repetitions of  [33513333] and $k-14$ repetitions of a loop at [33513333].   Recall that an operation preserving the length of the distance-array increases by $2$ the number of vertices in the seed graph, and an operation decreasing by $2$ the length of the distance array preserves the number of vertices. Along the directed walk from $L_{14}$ to [] we can see that operation $O_1$ is made $k-4$ times.  Since the initial graph has eight vertices, from it we generate a fullerene graph $F_{2k}$ with $2k (=8+2(k-4))$ vertices along this directed path. By Theorem \ref{generate} we have that this fullerene graph $F_{2k}$ has the anti-forcing number 4.
\end{proof}

%


\begin{cor}
$af(F_{26})=5$.
\end{cor}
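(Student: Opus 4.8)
The plan is to establish both bounds. For the upper bound $af(F_{26})\le 5$, I would simply exhibit the unique fullerene on 26 vertices (there is exactly one isomorphism class) together with an explicit set of five edges whose deletion leaves a graph with a unique perfect matching; verifying uniqueness of the remaining matching is a finite check that can be done directly on the drawing of $F_{26}$. So the entire substance of the corollary lies in proving the lower bound $af(F_{26})\ge 5$, i.e. that $F_{26}$ admits \emph{no} anti-forcing set of size exactly $4$ (we already know $af\ge 4$ in general from Theorem \ref{lower bond }).

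The natural strategy for the lower bound is to invoke Theorem \ref{generate}: a fullerene has anti-forcing number $4$ if and only if it is generated from one of the six initial seed graphs $F_{s4},F_{s5},F_{s9},F_{s13},F_{s14},F_{s15}$ by a series of operations tracing a directed walk in $D$ ending at the empty distance-array $[\,]$. Since each operation either adds $2$ vertices (operation $O_1$), adds $0$ vertices but shortens the distance-array by $2$ ($O_2,O_3$), or terminates by adding a fixed number of vertices ($O_4$ adds $10$, $O_3$ adds $0$), the final vertex count $26$ is forced to arise from a very constrained bookkeeping. First I would record, for each of the six seed graphs, its number of vertices (each has $8$), and note which terminating operation caps off the walk. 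The target $26$ means the walk must perform exactly the number of $O_1$-steps that, together with the vertices contributed by the terminating operation, sums to $26$: if the walk ends with $O_4$ then $8 + 2\,(\#O_1) + 10 = 26$ forces $\#O_1 = 4$, whereas if it ends with $O_3$ then $8 + 2\,(\#O_1)=26$ forces $\#O_1 = 9$.

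With the number of length-preserving versus length-reducing operations pinned down, the problem becomes a finite reachability question inside the explicit digraph $D$ (which the paper states has $52$ vertices and $72$ arcs, one loop, and three symmetric arc-pairs). I would enumerate all directed walks from each of the six admissible seed distance-arrays to $[\,]$ whose operation sequence has exactly the required number of $O_1$-steps (respectively matching the $O_4$- or $O_3$-termination) so that the generated fullerene has precisely $26$ vertices, and check that \emph{none} of the resulting graphs is (isomorphic to) $F_{26}$. Because $F_{26}$ is unique and the candidate walks of the correct length are few, this is a bounded search; equivalently, one shows directly that the smallest fullerene reachable past $F_{24}$ along any terminating walk jumps to $28$ vertices, leaving a genuine gap at $26$. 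The main obstacle will be this enumeration: I must be certain the vertex-count arithmetic is applied correctly across \emph{all} six seed graphs and both terminating operations, and that no exotic combination of $O_2/O_3$ shortenings interleaved with $O_1$ extensions sneaks in a $26$-vertex fullerene isomorphic to $F_{26}$. This is best discharged by the same program that computed $D$, but a hand argument reducing to the parity/arithmetic constraint above, followed by inspecting the handful of surviving walks, should suffice.
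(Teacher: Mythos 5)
Your two bounds are organized exactly as in the paper: the upper bound by exhibiting an explicit five-edge anti-forcing set on the unique $26$-vertex fullerene, and the lower bound by appealing to Theorem \ref{generate} and showing that no terminating walk in the distance-array digraph $D$ can output a $26$-vertex fullerene. The genuine difference is in how that decisive step is discharged. The paper never enumerates walks: it first disposes of the seeds $L_5,L_9,L_{13}$ (all their terminating walks give only $F_{24}$) and $L_{14}$ (all its terminating walks give at least $28$ vertices), and for the remaining seeds it observes that the subgraph $D'$ of $D$ formed by their terminating walks is \emph{bipartite}. Bipartiteness forces every walk from such a seed to $[\,]$ to have the same length parity as a known length-$10$ path; hence the length $l$ is even, the generated fullerene has $2l$ vertices, and its order is divisible by $4$, which excludes $26$ for all walks at once (there are infinitely many, because of the loop and the symmetric arc pairs). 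Your substitute --- fix the number of $O_1$-steps by vertex counting and then exhaust the walks of the resulting bounded length --- is sound in principle: no operation ever lengthens the distance array, so a closed walk in $D$ can consist only of $O_1$-arcs, each of which adds vertices; once $\#O_1$ is capped the search space really is finite. It is, however, a heavier computation than checking bipartiteness of one small digraph, and like the paper's ``we can check'', it remains a deferred finite verification rather than a completed argument.

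One premise in your bookkeeping needs repair before that enumeration can be trusted: the parenthetical claim that each of the six initial seed graphs has $8$ vertices. The seed configurations arise from two different sources --- configurations of $F_3''$ (six vertices) and configurations of $F_4''$ (eight vertices) --- and the paper explicitly attributes eight vertices only to $F_{s14}$ and $F_{s15}$ (and to $F_{s11}$ in its own proof of this corollary). If any of $F_{s4},F_{s5},F_{s9},F_{s13}$ is in fact a six-vertex seed, then the counts forced by ``$=26$'' change from $\#O_1=4$ (walks ending in $O_4$) and $\#O_1=9$ (walks ending in $O_3$) to $5$ and $10$ respectively, and a search keyed to the wrong counts could miss precisely the walks it is supposed to rule out. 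Within your framework the fix is mechanical --- record each seed's actual order and run the arithmetic seed by seed --- but as written this premise is unjustified, and it is the one place where your argument could silently go wrong.
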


\begin{proof}
From the above  constructions for $F_{24}$ and $F_{2k}$ $(k\geq 14)$, we know that any directed path from $L_5,L_9$,or $L_{13}$ to [] in $D$ only produce fullerene $F_{24}$, and any directed walk from $L_{14}$ to [] in $D$ only produce fullerenes with at least 28 vertices. So we only consider all directed walks from $L_4,L_{11}$, and  $L_{15}$ to  [] in $D$, which form a directed subgraph $D'$ of $D$. We can check that $D'$ is a {\em bipartite} directed graph. Let $P$ denote any directed walk of length $l$ from $L_{11}$ or  $L_{15}$ to []. Then from the initial seed graphs $F_{s11}$ or $F_{s15}$ we obtain a fullerene $F$ by making a series of operations along $P$, and $F$ has  $8+2(l-4)=2l$ vertices. Further, there are directed paths of length 10 from $L_{11}$ and   $L_{15}$ to [] in  $D'$ respectively. Since $D'$ is bipartite, $P$ has the length with the same parity with $10$. So $l$ is even and $F$ must have $4k$ vertices. From such discussions, by Theorem \ref{generate} we have $af(F_{26})\geq 5.$ On the other hand, there is a unique fullerene with 26 vertices and  we find an
 anti-forcing set with the size of 5 (see Fig. \ref{f26}). So $af(F_{26})=5$.
\end{proof}

\begin{figure}[htbp]
\begin{center}
\includegraphics[totalheight=5 cm]{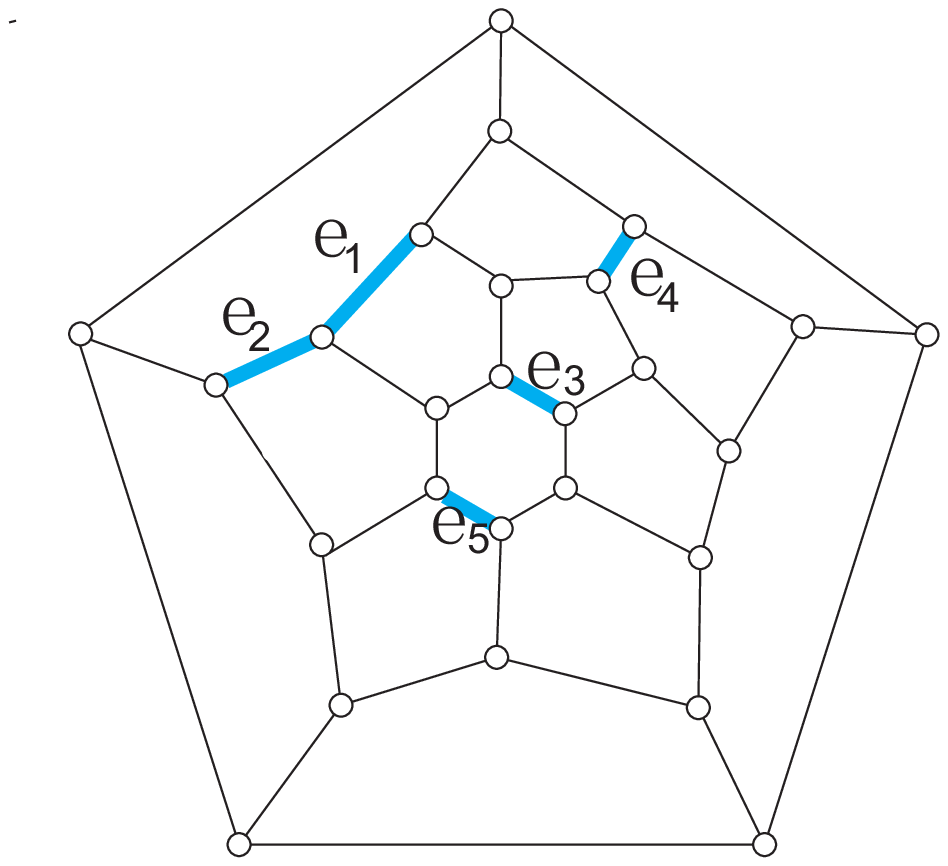}
\caption{\label{f26}\small{An anti-forcing set with the size of 5 on
$F_{26}$. }}
\end{center}
\end{figure}


\end{document}